\documentclass[11pt]{amsart}
\usepackage[margin=2.5cm]{geometry}
\usepackage{amssymb,amsmath,booktabs,enumitem}
\usepackage[T1]{fontenc}
\usepackage{lmodern,needspace}
\usepackage[dvipsnames]{xcolor}
\usepackage[colorlinks=true,linkcolor=MidnightBlue,citecolor=MidnightBlue,urlcolor=MidnightBlue]{hyperref}
\usepackage{array}

\newtheorem{theorem}{Theorem}[section]
\newtheorem{proposition}[theorem]{Proposition}
\newtheorem{lemma}[theorem]{Lemma}
\newtheorem{corollary}[theorem]{Corollary}

\newtheorem*{mainthm}{Theorem A}
\newtheorem*{thmB}{Theorem B}
\theoremstyle{remark}
\newtheorem{remark}[theorem]{Remark}

\newcommand{\PP}{\mathbb{P}}\newcommand{\ZZ}{\mathbb{Z}}\newcommand{\QQ}{\mathbb{Q}}
\newcommand{\CC}{\mathbb{C}}\newcommand{\FF}{\mathbb{F}}\newcommand{\OO}{\mathcal{O}}
\newcommand{\cC}{\mathcal{C}}\newcommand{\cX}{\mathcal{X}}\newcommand{\fa}{\mathfrak{a}}\newcommand{\fm}{\mathfrak{m}}
\newcommand{\ls}[1]{\lvert #1\rvert}\newcommand{\ob}{\operatorname{ob}}
\newcommand{\Conf}{\operatorname{Conf}}\newcommand{\coker}{\operatorname{coker}}\newcommand{\NF}{\operatorname{NF}}
\newcommand{\mult}{\operatorname{mult}}

\title[Lifting negative curves and Seshadri constants]{Lifting negative curves from finite fields and Seshadri constants at ten points}
\author{Piotr Pokora and Tomasz Szemberg}
\address{Piotr Pokora, Department of Mathematics, University of the National Education Commission Krakow, Podchor\k{a}\.zych 2, PL-30-084 Krak\'ow, Poland.}
\email{piotrpkr@gmail.com}
\address{Tomasz Szemberg, Department of Mathematics, University of the National Education Commission Krakow, Podchor\k{a}\.zych 2, PL-30-084 Krak\'ow, Poland.}
\email{tomasz.szemberg@gmail.com}
\subjclass[2020]{14C20, 14J26, 14D15, 14Q05}
\keywords{Seshadri constants, Nagata conjecture, finite fields, lifting and smoothing, negative curves, normal bundles}
\date{September 29, 2026}
\hypersetup{pdftitle={Lifting negative curves from finite fields and Seshadri constants at ten points},pdfauthor={Piotr Pokora and Tomasz Szemberg}}

\begin{document}

\begin{abstract}
We use finite-field calculations and mixed-characteristic deformation to construct nef divisors on very general complex blow-ups of the plane.
A lifting and smoothing criterion converts a reducible nodal divisor over $\FF_{127}$ into a smooth genus-$10$ curve of class $117H-37(E_1+\cdots+E_{10})$, retaining control of its first-order obstruction map.
Its normal bundle in a suitable deformation is semistable, so Petrakiev's multiplicity estimate yields the nef class $27379H-8658(E_1+\cdots+E_{10})$ at ten very general points.
Their multipoint Seshadri constant is therefore at least $8658/27379$, improving Petrakiev's bound $228/721$.
The geometric input is an existence criterion in characteristic zero verified by computations in positive characteristic.
\end{abstract}

\maketitle

\section{Introduction}

The geometry of plane curves with prescribed multiplicities is governed by the interplay between special configurations, where additional curves appear, and very general configurations, where these curves disappear.
This paper uses arithmetic to construct such a special configuration: we find a reducible negative divisor over a finite field, prove that it lifts and smooths in characteristic zero, and use its normal bundle to obtain a nef divisor on a very general complex blow-up.
The resulting improvement in the ten-point Seshadri constant is an application of this construction.
Its essential ingredient is a verifiable criterion for passing from a nodal finite-field example to a smooth complex curve with a prescribed nonzero obstruction to deformation.

\subsection{The Seshadri bound}
Let $p_1,\dots,p_{10}\in\PP^2_\CC$ be very general points, and let $X$ be their blow-up.
Write $H$ for the pull-back of a line, $E_i$ for the exceptional curves, and $E=\sum_{i=1}^{10}E_i$.
We use the normalization
$$
\varepsilon_{10}=\varepsilon(\PP^2,\OO(1);p_1,\dots,p_{10})
=\inf_C\frac{\deg C}{\sum_{i=1}^{10}\mult_{p_i}C}
=\sup\{t\ge0:H-tE\text{ is nef}\},
$$
where the infimum is over integral curves meeting at least one of the points.
The volume bound gives $\varepsilon_{10}\le1/\sqrt{10}$.
Nagata's conjecture, originating in his work on Hilbert's fourteenth problem \cite{Nag}, predicts equality.
More generally, it predicts $d>\sum_i m_i/\sqrt{s}$ for every nonzero plane curve of degree $d$ with multiplicities $m_i$ at $s\ge10$ very general points.
Nagata established the square cases $s\ge16$; the ten-point case is the first nonsquare case.
We refer to \cite{Primer} for background.

\begin{mainthm}
The divisor $27379H-8658E$ is nef on the blow-up of $\PP^2_\CC$ at ten very general points.
Equivalently, every plane curve of degree $d$ with multiplicities $m_i$ at these points satisfies
$$
27379\,d\ge8658\sum_{i=1}^{10}m_i.
$$
In particular,
$$
\varepsilon_{10}\ge\frac{8658}{27379}=0.3162277658059\ldots.
$$
\end{mainthm}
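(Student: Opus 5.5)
The plan follows the strategy outlined in the abstract: produce a special curve, show it lifts to characteristic zero with a semistable normal bundle, and feed it into Petrakiev's multiplicity estimate.

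\emph{Step 1: the curve over a finite field.} Over $\FF_{127}$ I would exhibit ten points $\bar p_1,\dots,\bar p_{10}\in\PP^2_{\FF_{127}}$ and a reduced nodal divisor $\bar D=\sum_j \bar C_j$ on their blow-up $\bar X$, with total class
$$
117H-37E.
$$
The components $\bar C_j$ should be smooth rational curves or curves of low genus, found by computer search. I would then check computationally that:
\begin{itemize}
\item the components meet transversally;
\item the arithmetic genus is $p_a=\binom{116}{2}-10\binom{37}{2}=6670-6660=10$;
\item the self-intersection is $D^2=117^2-10\cdot 37^2=13689-13690=-1$.
\end{itemize}
This is a negative class on a special configuration.

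\emph{Step 2: lifting and smoothing.} I would apply the paper's lifting and smoothing criterion. Lift the points to a complete DVR of mixed characteristic, say $W(\FF_{127})$ or an extension of it. The obstruction to deforming the pair (points, curve) lies in an $H^1$ of a normal-type sheaf of $\bar D$. Since $D^2=-1$ and $p_a=10$, we get $\chi(N_D)=D^2+1-p_a<0$, so the obstruction space is nonzero. The claim to verify is that the first-order obstruction map, from deformations of the ten points (a $20$-dimensional space) to $H^1(N_{\bar D})$, is surjective with the correct corank. This rank condition can be checked by linear algebra over $\FF_{127}$. It yields a family over a codimension-one locus in the space of ten points in characteristic zero on which the curve deforms. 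Smoothing the nodes is independent at each node, because the node-smoothing directions map onto $\bigoplus T^1_{\text{node}}$; this is again a rank check. The result is a smooth genus-$10$ curve $C\subset X_0$ of class $117H-37E$ over $\CC$ on a special blow-up.

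\emph{Step 3: semistability and Petrakiev.} Next consider a one-parameter family degenerating very general points to the special configuration. The line bundle $N_C$ restricted along the family, corrected by the obstruction direction, gives the bundle to which Petrakiev's estimate applies. I would show that this rank-$2$ bundle is semistable. The approach is to bound the degree of any sub-line bundle using the nonvanishing of the obstruction map: a destabilizing subbundle would force the obstruction to factor through a smaller space, contradicting the rank computed in Step 2. Petrakiev's multiplicity estimate then bounds the multiplicity along $C$ of any curve on the very general blow-up. Feeding in $d=117$, $m=37$, $g=10$ and $D^2=-1$, I would solve the resulting inequality, which should give the nef class $27379H-8658E$; this is a routine arithmetic check. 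Finally, nefness implies $\varepsilon_{10}\ge 8658/27379$ by the stated normalization.

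\emph{Main obstacle.} I expect semistability of the normal bundle to be the hardest part. Whether Petrakiev's bound applies depends on it, and it requires translating the finite-field rank data into a statement about the characteristic-zero bundle. My first attempt would be upper semicontinuity: check on the special fibre over $\FF_{127}$ that no line subbundle of too large a degree exists, for instance by computing $h^0$ of suitable twists. Semistability would then propagate to the generic fibre.
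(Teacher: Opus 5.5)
Your outline follows the paper's route, and your node-smoothing condition is an equivalent first-order form of the paper's hypothesis (S3). The route is: a reducible nodal member of $\ls{117H-37E}$ over $\FF_{127}$; a lift over $\ZZ_p$ because $\ob_\Theta$ is surjective onto $H^1$ (this is (S1), which also needs $h^0(\Theta)=1$); smoothing of the nodes; then a nonsplit extension and Petrakiev's estimate in its reflection form $s_\Theta(H)=H+2(H\cdot\Theta)\Theta=H+234\Theta$. The genus plays no role in that last step. To see the equivalence with (S3), given (S1): on the $10$-dimensional tangent space of $L_\Theta$, the kernel of the map to $\bigoplus_j T^1_{n_j}$ consists of the equisingular directions. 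These are exactly the directions along which both components lift, namely $\ker\ob_\alpha\cap\ker\ob_\beta$. So surjectivity is the same as $\operatorname{rank}(\ob_\alpha,\ob_\beta)=10+k$. It gives $\tau_j\notin pR$ directly, where the paper uses the dimension count of Lemma~\ref{lem:nodes}. Geometric irreducibility then follows because the smoothed curve is smooth and, $\Gamma_0$ being connected and reduced, geometrically connected.

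Several statements in your sketch are wrong, though, and would mislead an actual execution.
\begin{itemize}
\item The locus where the curve deforms has codimension $h^1(\Theta)=10$, not one.
\item The eight infinitesimal projective motions lie in $T_{Z_0}L_\Theta$ and preserve the nodes, so the node-smoothing map has rank at most $10-8=2$. Hence $k\le 2$, and $p_a=10$ forces the components to have total genus at least $9$. Your components therefore cannot all be rational or of low genus. The certified seed is a rational curve of degree $12$ and a genus-$9$ curve of degree $105$, meeting in two points.
\item The checks $p_a=10$ and $\Theta^2=-1$ are numerical and need no computation. What must be certified is $h^0(\Theta)=1$, the rank conditions, and smoothness and absolute irreducibility of the components.
\item Semistability is the easy step, not the main obstacle. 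Since $\deg N_{\Gamma/\cX}=-1$, a destabilizing line subbundle has degree $\ge 0$. It is not $N_{\Gamma/X_0}$, which has degree $-1$, so it maps nonzero to $\OO_\Gamma$; hence it is $\OO_\Gamma$ and splits the extension. So a single direction $v$ with $\ob_\Theta(v)\ne 0$ suffices, and the rank persists at the lifted point.
\item Your fallback of checking semistability on the $\FF_{127}$ fibre and propagating it is ill-posed and unnecessary. There the curve is the reducible $\tilde A+\tilde B$. The bundle entering Petrakiev's estimate lives on the smooth characteristic-zero curve in a complex family $\cX\to T$ taken without base change, and the symmetric-power semistability it uses needs characteristic zero.
\end{itemize}
The real work lies in the lifting and smoothing and in producing the certified configuration.
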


The numerical relation with the preceding bound is particularly simple:
$$
(19+6\sqrt{10})^2=721+228\sqrt{10},\qquad
(19+6\sqrt{10})^3=27379+8658\sqrt{10}.
$$
Thus Theorem~A realizes the third positive solution of $x^2-10y^2=1$ as a nef class, following the class $721H-228E$ obtained by Petrakiev \cite{Pet}.
The gap between the lower bound and $1/\sqrt{10}$ decreases from approximately $3.04\cdot10^{-7}$ to $2.11\cdot10^{-10}$, a factor of approximately $1442$.
The Pell equation identifies the numerical target; the substantive issue is to construct the geometry which makes that target nef.

\subsection{A finite-field construction of the required complex curve}
Set
$$
\Theta=117H-37E.
$$
Then $\Theta^2=-1$, $K_X\cdot\Theta=19$, and $p_a(\Theta)=10$.
The reflection in this class sends $H$ to the class in Theorem~A:
$$
s_\Theta(H)=H+2(H\cdot\Theta)\Theta
=H+234\Theta=27379H-8658E.
$$
Petrakiev's normal-bundle method \cite{Pet}, in its reflection formulation, reduces nefness to an existence problem on a special blow-up.
It is enough to construct a smooth irreducible curve $\Gamma\in\ls\Theta$ and a motion of the ten points along which $\Gamma$ does not lift to first order.
Indeed, a nonsplit extension
$$
0\longrightarrow\OO_\Gamma(\Gamma)
\longrightarrow N_{\Gamma/\cX}
\longrightarrow\OO_\Gamma\longrightarrow0
$$
is automatically semistable when $\Gamma^2=-1$.
We prove this reduction in Section~\ref{sec:refl}.
The reflection viewpoint also appears in the work of Dionne--Roth \cite{DR} and in the recent work of Laface--Ugaglia \cite{LU}.
Here the reflecting curve has genus $10$, and its construction is the main difficulty.

A search over $\FF_{127}$ produces a configuration whose unique divisor in $\ls\Theta$ is reducible.
Its two components have degrees $12$ and $105$, and their strict transforms are smooth curves of genera $0$ and $9$, meeting transversally in two points.
The crucial additional information is infinitesimal: the obstruction map for the total divisor has rank $10$, whereas the combined obstruction map for its two components has rank $12$.
These ranks are computed in the full $20$-dimensional space of ordered configurations, allowing motions which break the symmetry used in the search.

The following statement isolates the passage from this arithmetic example to the required complex curve.
The obstruction map measures the failure of a section of a linear system to lift when its base points move; it is defined explicitly in Proposition~\ref{prop:ss}.

\begin{thmB}[Lifting and smoothing criterion]
Suppose a ten-point blow-up over $\FF_p$ carries a unique divisor $\Gamma_0\in\ls\Theta$, and write $\Gamma_0=\tilde A+\tilde B$.
Assume that $\tilde A$ and $\tilde B$ are smooth geometrically irreducible strict transforms of plane curves, meeting transversally at $k\ge1$ points.
Let their classes be $\alpha$ and $\beta$, and suppose that
$$
h^0(\alpha)=h^0(\beta)=1,\qquad
\operatorname{rank}\ob_\Theta=10,\qquad
\operatorname{rank}(\ob_\alpha,\ob_\beta)=10+k.
$$
Then a complex ten-point blow-up carries a smooth irreducible curve $\Gamma\in\ls\Theta$ with $\operatorname{rank}\ob_\Theta=10$.
Such configurations occur in a family of dimension $10$ in the space of ordered configurations.
\end{thmB}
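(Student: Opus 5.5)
The plan is to realize everything inside a single relative incidence scheme over the Witt vectors $W=W(\FF_p)$ (or over $\ZZ_p$ after passing to the prime field of definition), and to read off both the lifting and the smoothing from rank conditions on obstruction maps. Let $\Conf$ be the $20$-dimensional space of ordered ten-point configurations over $W$, and let $\cX\to\Conf$ be the universal blow-up. Let $\mathcal{H}_\Theta\to\Conf$ be the relative Hilbert scheme of divisors in $\ls\Theta$ on the fibres, with the analogous schemes $\mathcal{H}_\alpha$ and $\mathcal{H}_\beta$. Since $\chi(\Theta)=(\Theta^2-K_X\cdot\Theta)/2+1=-9$ and $h^0(\Theta)=1$ at the special point, the deformation theory of $\Gamma_0$ as a divisor in a varying surface has the following shape. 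The tangent space is $\ker\ob_\Theta$, where $\ob_\Theta\colon T\Conf\to H^1(\OO_{\Gamma_0}(\Gamma_0))$ is the obstruction map of Proposition~\ref{prop:ss}. The obstruction space is $\coker\ob_\Theta$. Because $\deg\OO_{\Gamma_0}(\Gamma_0)=-1$ and $\Gamma_0$ is connected of arithmetic genus $10$, we get $h^0(\OO_{\Gamma_0}(\Gamma_0))=0$ and $h^1(\OO_{\Gamma_0}(\Gamma_0))=10$. The hypothesis $\operatorname{rank}\ob_\Theta=10$ therefore says that $\ob_\Theta$ is surjective, so $\mathcal{H}_\Theta$ is unobstructed at $[\Gamma_0]$.

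Applying the standard mixed-characteristic version of this argument (the obstruction to lifting over $W/p^{n+1}\to W/p^n$ lies in the same cokernel), I get that $\mathcal{H}_\Theta\to\operatorname{Spec}W$ is smooth of relative dimension $20-10=10$ at $[\Gamma_0]$. The same reasoning applies to the pair $(\tilde A,\tilde B)$. The components are rigid on their fibres ($h^0(\alpha)=h^0(\beta)=1$), and the combined map $(\ob_\alpha,\ob_\beta)$ has rank $10+k$. After checking that $h^1(N_{\tilde A})+h^1(N_{\tilde B})=10+k$, this rank is again surjectivity; the check is a Riemann--Roch count using $\alpha\cdot\beta=k$, $\alpha+\beta=\Theta$ and the genera $g_A+g_B=11-k$. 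Hence the locus $\mathcal{R}\subset\mathcal{H}_\Theta$ of configurations where the divisor stays reducible as $A+B$ is smooth over $W$ of relative dimension $10-k<10$. Hensel's lemma then gives $W$-points of $\mathcal{H}_\Theta$ through $[\Gamma_0]$. The generic fibre, a smooth $10$-dimensional variety over $\operatorname{Frac}W$, embeds into the complex numbers, and its complex points form a $10$-dimensional family of configurations.

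The remaining task is to show that the general member of the component of $\mathcal{H}_\Theta\otimes\CC$ through a lift of $[\Gamma_0]$ is smooth, and this is the step I expect to be the main obstacle. Near $\Gamma_0$, singular members can only occur near the $k$ nodes. Moreover, any member that keeps all $k$ nodes is, by a genus count and rigidity of $\alpha$ and $\beta$, again of the form $A+B$, so it lies in $\mathcal{R}$. I would compare the two exact sequences
$$
0\to N_{\tilde A}\oplus N_{\tilde B}\to N_{\Gamma_0}\to \textstyle\bigoplus_{j=1}^k T^1_{q_j}\to0
$$
(with $N_{\Gamma_0}=\OO_{\Gamma_0}(\Gamma_0)$, $T^1_{q_j}\cong\CC$ the node-smoothing spaces) and the corresponding relative versions over $\Conf$. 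From them I would deduce that the composite
$$
\ker\ob_\Theta\longrightarrow\textstyle\bigoplus_j T^1_{q_j}
$$
has kernel $\ker(\ob_\alpha,\ob_\beta)$, hence rank $10-(10-k)=k$, i.e.\ it is surjective. Concretely, first-order deformations of $\Gamma_0$ in $\mathcal{H}_\Theta$ smooth each node independently. By the local versal description of nodes, the locus in $\mathcal{H}_\Theta$ where the $j$-th node survives is then a smooth divisor, and these $k$ divisors meet transversally along $\mathcal{R}$. A general point of $\mathcal{H}_\Theta$ near $[\Gamma_0]$ therefore lies off all of them. This holds already over $\operatorname{Frac}W$, and hence after base change to $\CC$.

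Such a member $\Gamma$ is smooth. It is connected because it is a flat deformation of the connected $\Gamma_0$, so it is irreducible of genus $10$. Surjectivity of $\ob_\Theta$ is an open condition, and it holds at the special fibre, so it persists on an open neighbourhood of $[\Gamma_0]$ in $\mathcal{H}_\Theta$, which contains these general points. This gives $\operatorname{rank}\ob_\Theta=10$ at $\Gamma$. Finally, since $h^0(\Theta)=1$ persists nearby by semicontinuity together with nonemptiness, the projection $\mathcal{H}_\Theta\to\Conf$ is locally an embedding. Its image is then the asserted $10$-dimensional family of ordered configurations.
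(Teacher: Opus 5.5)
Your proof is correct, but its smoothing and irreducibility steps take a different route from the paper's. Your relative Hilbert-scheme obstruction theory is the intrinsic form of Lemma~\ref{lem:smooth}, which gets the same smoothness of $L_\Theta$ and $L_\alpha\cap L_\beta$ from Schur complements and the Jacobian criterion.

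The real divergence is at the nodes. The paper proves only $\tau_j\notin pR$, and it does so indirectly. If some $\tau_j$ vanished mod $p$, a dimension count in $R/p$ would give a germ outside $L_\alpha\cap L_\beta$ along which all nodes persist. Normalizing along the node sections, that germ has a reducible generic member, contradicting the Gauss-lemma specialization Lemma~\ref{lem:spec}. The paper then uses Strassmann to choose a $\ZZ_p$-point with $a\prod_j\tau_j\ne0$ for some $0\ne a\in\fa$. It deduces geometric irreducibility from $a(Z)\ne0$ and Lemma~\ref{lem:spec} again.

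You instead use the sequence $0\to N_{\tilde A}\oplus N_{\tilde B}\to N_{\Gamma_0}\to\bigoplus_jT^1_{q_j}\to0$, together with the fact that locally trivial first-order deformations of a nodal curve are exactly pairs of branch deformations. This shows that $(d\tau_j)_j$ maps $\ker\ob_\Theta$ onto $\bigoplus_jT^1_{q_j}$. Your conclusion is stronger than the paper's: $p,\tau_1,\dots,\tau_k$ form part of a regular system of parameters, so the nodes can be smoothed independently. Irreducibility then comes for free from smoothness plus connectedness, using semicontinuity of $h^0(\OO)$ and the connected reduced special fibre. You need neither Lemma~\ref{lem:spec}, nor the element $a$, nor your remark that members keeping all nodes lie in $\mathcal R$.

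What the paper's route buys is elementarity. It never needs the equisingular deformation theory of nodal curves. Nor does it need the compatibility between the relative normal sequences and the maps $\ob_\alpha,\ob_\beta,\ob_\Theta$, which you only sketch. Likewise, it reaches $\CC$ through a finite-type scheme over $\QQ$ rather than an abstract embedding $\operatorname{Frac}W\hookrightarrow\CC$, although your embedding argument is also legitimate.

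One small correction. $h^0(\OO_{\Gamma_0}(\Gamma_0))=0$ does not follow from total degree $-1$ on a reducible curve: the degree on $\tilde B$ is $\beta^2+k$, which equals $1$ in the application. It follows instead from $h^0(\Theta)=1$ and $H^1(\OO_{X_0})=0$, via $0\to\OO_{X_0}\to\OO_{X_0}(\Gamma_0)\to\OO_{\Gamma_0}(\Gamma_0)\to0$.
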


Theorem~\ref{thm:seed} gives the precise formulation and proof.
The first rank condition makes the effectivity locus of $\Theta$ smooth over $\ZZ_p$, of relative dimension $10$, near the certified configuration.
The second makes the locus on which both components persist smooth of relative dimension $10-k$.
This dimension difference controls the node-smoothing parameters and permits a characteristic-zero deformation which is both smooth and geometrically irreducible.
The obstruction rank persists on that deformation.
Consequently, one finite-field example certifies all the geometry needed for the reflection argument over $\CC$.

The distinction between existence and lifting matters here.
A nonzero section in characteristic $p$ need not survive in characteristic zero, and a reducible divisor need not admit a smoothing in the prescribed linear system.
The obstruction-rank conditions address both issues.
The role of positive characteristic is therefore an arithmetic construction of the required special geometry, followed by a mixed-characteristic deformation argument.
Semistability of symmetric powers, used in the final normal-bundle estimate, is invoked only after passing to characteristic zero.

Our approach was partly inspired by H\`a and Sivakumar's proof of the Chudnovsky--Demailly conjectures \cite{HS}, in which passage to positive characteristic is central to establishing inequalities over a field of characteristic zero.
The connection is methodological: their Frobenius and Hasse-derivative arguments concern symbolic powers of ideals of points, whereas our proof uses a finite-field divisor, determinantal smoothness, and smoothing of nodes.

\subsection{Earlier bounds and geometric context}
Table~\ref{tab:history} places Theorem~A among some of the principal earlier estimates.
All entries use the same normalization of $\varepsilon_{10}$ and are listed in increasing numerical order, rather than as a strict chronology.
In the homogeneous interpolation notation, a bound $\varepsilon_{10}\ge c$ corresponds to the asymptotic inequality $d/m\ge10c$.
Conversely, a uniform homogeneous emptiness bound implies the corresponding inequality for arbitrary multiplicities at very general points by symmetrizing over permutations of the points.
This conversion is important when comparing results stated in different forms.

\begin{table}[htbp]
\centering\small
\begin{tabular}{@{}m{7.0cm}cl@{}}
\toprule
Source & Lower bound for $\varepsilon_{10}$ & Approximate value\\
\midrule
Biran \cite{Bir} & $6/19$ & $0.3157894737$\\
Harbourne--Ro\'e \cite{HR} & $177/560$ & $0.3160714286$\\
Ciliberto--Miranda \cite{CM} & $55/174$ & $0.3160919540$\\
Dumnicki \cite{Dum}; see \cite[\S3]{Primer} & $313/990$ & $0.3161616162$\\
Ciliberto--Dumitrescu--Miranda--Ro\'e \cite{CDMR};\newline Eckl \cite{Eck} & $117/370$ & $0.3162162162$\\
Petrakiev \cite{Pet} & $228/721$ & $0.3162274619$\\
Theorem~A & $8658/27379$ & $0.3162277658$\\
\midrule
Nagata's predicted value & $1/\sqrt{10}$ & $0.3162277660$\\
\bottomrule
\end{tabular}
\caption{Selected lower bounds for the ten-point Seshadri constant, ordered by strength.}\label{tab:history}
\end{table}

These estimates reflect several different ways of controlling plane interpolation.
Biran's construction combines gluing of ample divisors with Cremona transformations \cite{Bir}.
Harbourne--Ro\'e organize the possible obstructing curves into a computational approach to nefness \cite{HR}.
Dumnicki's splitting methods give effective tests for non-speciality \cite{Dum}; the value $313/990$ is recorded in \cite[\S3]{Primer}.
Ciliberto--Miranda prove non-speciality for $d/m\ge174/55$ by degenerating the blown-up plane and modifying components with negative intersection \cite{CM}.
The subsequent emptiness argument of Ciliberto--Dumitrescu--Miranda--Ro\'e and the asymptotic degeneration argument of Eckl yield $117/370$ \cite{CDMR,Eck}.
Petrakiev obtains $228/721$ using degenerations to curves and semistable vector bundles \cite{Pet}.
Our construction uses his multiplicity estimate after producing the reflecting curve by arithmetic deformation.

The work of de Fernex \cite{dFneg,dFcone} provides a complementary structural perspective through negative curves and the Mori cone of very general plane blow-ups.
This perspective concerns the geometry surrounding Nagata's ray, rather than a further numerical entry in Table~\ref{tab:history}.
For our construction it is important to distinguish the two surfaces involved: the genus-$10$ curve of self-intersection $-1$ exists on a special blow-up, whereas its reflected nef class is obtained on a very general blow-up.
The curve itself is not asserted to persist at very general points.

\subsection{Organization}
Section~\ref{sec:refl} explains how a smooth negative curve with a nonzero obstruction map produces a nef class.
Section~\ref{sec:seed} proves the lifting and smoothing criterion.
Section~\ref{sec:cert} supplies the finite-field configuration and specifies the computations verifying the hypotheses.
Section~\ref{sec:proofA} completes the proof of Theorem~A and records a consequence for possible counterexamples to Nagata's conjecture.
Section~\ref{sec:conclusion} discusses the scope of the construction.
The additional symmetry constraints and computational searches are collected in Appendix~\ref{sec:constraints}.
We work over $\CC$ except in the explicitly indicated arithmetic constructions.

\section{From an obstructed negative curve to a nef divisor}\label{sec:refl}

Let us fix the notation. For a configuration $Z=\{z_1,\dots,z_{10}\}$ of distinct points in $\PP^2$ we denote by $X_Z$ the blow-up of $\PP^2$ along $Z$. We write $H$ for the pull-back of a line, $E_i$ for the exceptional curves, $E=E_1+\dots+E_{10}$, and $K=-3H+E$ for the canonical class.

Let $(T,0)$ be a smooth pointed curve, and let $\cX\to T$ be the blow-up of $\PP^2\times T$ along ten disjoint sections $t\mapsto z_i(t)$. Then $N^1(X_t)=\ZZ H\oplus\bigoplus_i\ZZ E_i$ for all $t$. For a smooth curve $\Gamma\subset X_0$ we have the exact sequence
\begin{equation}\label{eq:N}
0\to N_{\Gamma/X_0}\to N_{\Gamma/\cX}\to\OO_\Gamma\to0 .
\end{equation}
The following result is a semistable version of Petrakiev's Basic Lemma \cite[Lemma~2.1]{Pet}; compare also \cite[Theorem~8.1]{DR} and \cite[Lemma~1.4]{LU}.

\begin{proposition}\label{prop:refl}
Let $\Gamma\subset X_0$ be a smooth connected curve of class $\gamma$ such that the bundle $N_{\Gamma/\cX}$ is semistable of slope $-\kappa<0$, so that $\gamma^2=-2\kappa$. If $D$ is nef on $X_0$, then
$$
s_\gamma(D)=D+\frac{D\cdot\gamma}{\kappa}\,\gamma
$$
is nef on $X_t$ for all but countably many $t\in T$, and hence on the blow-up of $\PP^2$ at ten very general points.
\end{proposition}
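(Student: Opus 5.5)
The plan is to follow Petrakiev's degeneration argument: argue by contradiction, specialize a hypothetical negative curve to $X_0$, and use semistability of symmetric powers of $N_{\Gamma/\cX}$ to bound the multiplicity of the limit along $\Gamma$.

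\textbf{Step 1 (reduction).} Let $D'=s_\gamma(D)$, where $s_\gamma$ is the reflection in $\gamma$. Note that $D'\cdot\gamma=D\cdot\gamma-2(D\cdot\gamma)=-D\cdot\gamma$, and that $D'^2=D^2\ge0$. Suppose $D'$ fails to be nef on $X_t$ for uncountably many $t$. The relative Hilbert scheme has countably many components, so after base change along a finite cover of $T$ we get a flat family of integral curves $C_t\subset X_t$ with constant class $c$ and $D'\cdot c<0$. Its flat limit $C_0$ is an effective divisor on $X_0$ of class $c$. Write $C_0=m\Gamma+R$, where $R$ does not contain $\Gamma$. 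Then $R$ is effective, so $D\cdot R\ge0$ by nefness of $D$ on $X_0$, and $\Gamma\cdot R\ge0$.

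\textbf{Step 2 (multiplicity estimate).} This is the core, and I expect it to be the main obstacle. I would bound $m$ using the normal bundle. Let $\mathcal{I}$ be the ideal of $\Gamma$ in $\cX$, and consider the family $\mathcal{C}=\overline{\bigcup_{t\ne0}C_t}$. This is a Cartier divisor on $\cX$ (after shrinking $T$), and it vanishes to some order $\mu\le m$ along $\Gamma$. Its leading term gives a nonzero section of $\operatorname{Sym}^{\mu}(N^\vee_{\Gamma/\cX})\otimes\OO_\Gamma(\mathcal C)$, that is, a nonzero map $\operatorname{Sym}^\mu N_{\Gamma/\cX}\to\OO_\Gamma(c)$. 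We are in characteristic zero, so $\operatorname{Sym}^\mu N$ is semistable of slope $-\mu\kappa$. Hence $c\cdot\gamma=\deg\OO_\Gamma(c)\ge-\mu\kappa$.

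The delicate point is to relate $\mu$ to $m$. Because $\Gamma$ is not fixed by the deformation (the extension need not split), the generic curves $C_t$ need not contain $\Gamma$. One has to check that $C_0$ contains $\Gamma$ with multiplicity at least the vanishing order of $\mathcal C$ along $\Gamma$, while the inequality we want runs the other way. Following Petrakiev, I would instead apply the section argument to $\mathcal{C}$ twisted by $-X_0$ suitably, or take $\mu=\operatorname{ord}_\Gamma\mathcal C$ and use $m\le\ldots$. In the end the inequality to aim for is
$$
c\cdot\gamma\ \ge\ -\kappa\, m' ,\qquad m'=\operatorname{ord}_\Gamma \mathcal C\ \text{and}\ C_0-m'\Gamma\ \text{effective}.
$$
So in Step 1 I would redefine $m:=m'$ and $R:=C_0-m'\Gamma$. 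With this choice $R$ is effective, but it may contain $\Gamma$. That is harmless for $D\cdot R\ge0$, though not for $\Gamma\cdot R$, so Step 3 avoids using $\Gamma\cdot R$.

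\textbf{Step 3 (conclusion).} We have $c=m\gamma+R$ and $c\cdot\gamma\ge-\kappa m$. Compute
$$
D'\cdot c=D\cdot c+\frac{D\cdot\gamma}{\kappa}\,c\cdot\gamma\ \ge\ D\cdot c-(D\cdot\gamma)m = D\cdot R\ \ge\ 0 ,
$$
where we used $D\cdot\gamma\ge0$. This contradicts $D'\cdot c<0$. Therefore $D'$ is nef on $X_t$ outside a countable set. Since very general ten-point blow-ups are all deformation-equivalent in such a family, with the very general configuration lying in the image of $T$ up to monodromy of markings, nefness on $X_t$ for very general $t$ transfers to the very general blow-up: nef classes on very general blow-ups are determined by the countable union of Noether–Lefschetz-type loci of curves.
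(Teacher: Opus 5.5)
Steps 2 and 3 match the paper's argument. The nonzero map $\operatorname{Sym}^\mu N_{\Gamma/\cX}\to\OO_\Gamma(\cC)$, the bound $c\cdot\gamma\ge-\mu\kappa$, and the estimate $D'\cdot c\ge D\cdot(C_0-\mu\Gamma)\ge0$ are the paper's self-adjointness computation written out. Your final choice $m=\operatorname{ord}_\Gamma\cC$ is fine, and the worry about the direction of the inequality was unfounded, since $\mu\le m$ gives $-\mu\kappa\ge-m\kappa$.

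\textbf{The gap is in Step 1.} You pass to a finite cover $T'\to T$ to get a single family of curves of class $c$. You then use semistability of the normal bundle of $\Gamma$ in the pulled-back family $\cX'=\cX\times_TT'$, and that is what fails. Suppose $T'\to T$ is ramified over $0$, say $t=s^e$ with $e\ge2$. The Kodaira--Spencer class of $\cX'$ at $s=0$ is $\frac{dt}{ds}(0)=0$ times that of $\cX$. So the sequence \eqref{eq:N} for $\Gamma\subset\cX'$ splits, and $N_{\Gamma/\cX'}\cong N_{\Gamma/X_0}\oplus\OO_\Gamma$ contains a subbundle of slope $0>-\kappa$. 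The estimate $c\cdot\gamma\ge-\mu\kappa$ is then no longer available. You cannot choose $T'$ \'etale over $0$ in general, because the component of the relative Hilbert scheme dominating $T$ may be branched there. Non-splitting is the whole content of the hypothesis. In the trivial family, $\Gamma$ itself persists, and $s_\gamma(D)\cdot\gamma=-D\cdot\gamma<0$ whenever $D\cdot\gamma>0$.

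\textbf{Repair.} Avoid base change altogether, as the paper does. The curve on the geometric generic fibre is defined over a finite extension of $\CC(T)$. The sum of its Galois conjugates is defined over $\CC(T)$, and its closure $\cC$ in the original $\cX$ is flat over $T$ with generic class $ec$. All conjugates have class $c$, because $H$ and the $E_i$ are defined over $T$. Equivalently, take the image of your family in $\cX$, remembering that its fibres have class $ec$ rather than $c$. Steps 2--3 applied to $ec$ then give $e\,s_\gamma(D)\cdot c\ge0$.

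\textbf{Last step.} Your final sentence is not correct as written: a very general configuration does not lie on the curve $T$, and no monodromy argument is needed. Use instead the following. For each of the countably many classes $c$ with $s_\gamma(D)\cdot c<0$, the locus of configurations on which $c$ is effective is closed in configuration space. It misses the generic point of $T$, so it is a proper closed subset, and a very general configuration avoids the union of these subsets.
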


\begin{proof}
Let $c$ be the class of an effective divisor on the geometric generic fibre of $\cX\to T$. This divisor is defined over a finite extension of $\CC(T)$. Taking the sum of its Galois conjugates, we obtain a divisor defined over $\CC(T)$ whose closure $\cC\subset\cX$ is flat over $T$ and has generic class $ec$ for some integer $e\ge1$. Let us stress that we do not perform any base change here, since a ramified base change would kill the Kodaira--Spencer class and split \eqref{eq:N}.

Denote by $\mu$ the multiplicity of $\cC$ along $\Gamma$. A local equation of $\cC$ defines a non-zero map $\OO_\Gamma(-\cC)\to\operatorname{Sym}^\mu N^\vee_{\Gamma/\cX}$. Since we are in characteristic zero, the target is semistable of slope $\mu\kappa$, and hence $-e\,c\cdot\gamma\le\mu\kappa$. This shows that
$$
s_\gamma(ec)=\bigl(\cC|_{X_0}-\mu\Gamma\bigr)+\Bigl(\mu+\frac{ec\cdot\gamma}{\kappa}\Bigr)\Gamma
$$
is effective on $X_0$. As $s_\gamma$ is a self-adjoint isometry, we get $e\,s_\gamma(D)\cdot c=D\cdot s_\gamma(ec)\ge0$. So $s_\gamma(D)$ is nef on the geometric generic fibre.

Now let $c$ be a class with $s_\gamma(D)\cdot c<0$. The relative Hilbert scheme of curves of class $c$ is proper over $T$ and does not dominate $T$, so it lies over finitely many points of $T$. Since there are only countably many such classes, $s_\gamma(D)$ is nef on $X_t$ for all but countably many $t$.

Finally, for each such class $c$, the configurations on which $c$ is effective form a closed subset of the configuration space. This subset misses the general point of $T$, so it is proper. Therefore $s_\gamma(D)$ is nef at a very general configuration.
\end{proof}

In order to apply Proposition~\ref{prop:refl} we need a computable criterion for semistability. Let $\Gamma\subset X_0$ be the strict transform of a plane curve $f=0$ of degree $d$ with multiplicities $m_i$ at the points $z_i$. We denote by $M(Z)$ the interpolation matrix, that is, the matrix of the jet map from the space of polynomials of degree at most $d$ to $\bigoplus_i\OO/\fm_{z_i}^{m_i}$.

\Needspace{10\baselineskip}
\begin{proposition}\label{prop:ss}
Assume that $\Gamma$ is smooth and $\Gamma^2<0$.
\begin{enumerate}[label=\textup{(\alph*)}]
\item We have $H^1(N_{\Gamma/X_0})\cong H^1(\OO_{X_0}(\Gamma))=\coker M(Z_0)$. If the points move with velocity $v$, then the class of \eqref{eq:N} is $e(v)=-[(\partial_vM)f]$, and $\Gamma$ lifts to first order along $v$ if and only if $e(v)=0$. We write $\ob(v)=[(\partial_vM)f]$ and call $\ob$ the obstruction map.
\item If $\Gamma^2=-1$, then $N_{\Gamma/\cX}$ is semistable if and only if $e(v)\ne0$.
\end{enumerate}
\end{proposition}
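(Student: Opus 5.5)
For part (a), I will start from the normal sequence $0\to T_\Gamma\to T_{X_0}|_\Gamma\to N_{\Gamma/X_0}\to0$ together with $N_{\Gamma/X_0}=\OO_\Gamma(\Gamma)$. Taking cohomology of $0\to\OO_{X_0}\to\OO_{X_0}(\Gamma)\to\OO_\Gamma(\Gamma)\to0$ and using $H^1(\OO_{X_0})=H^2(\OO_{X_0})=0$ on a rational surface gives $H^1(\OO_\Gamma(\Gamma))\cong H^1(\OO_{X_0}(\Gamma))$.

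To compute $H^1(\OO_{X_0}(\Gamma))$, I write $\OO_{X_0}(\Gamma)=\pi^*\OO(d)\otimes\OO(-\sum m_iE_i)$. Then $\pi_*\OO_{X_0}(\Gamma)=\OO(d)\otimes\mathcal I$, where $\mathcal I=\bigcap\fm_{z_i}^{m_i}$, and $R^1\pi_*$ vanishes because $m_i\ge0$. This gives
$$0\to H^0(\OO(d)\otimes\mathcal I)\to H^0(\OO(d))\xrightarrow{M(Z_0)}\bigoplus_i\OO/\fm_{z_i}^{m_i}\to H^1(\OO(d)\otimes\mathcal I)\to0,$$
since $H^1(\OO_{\PP^2}(d))=0$, so $H^1(\OO_{X_0}(\Gamma))=\coker M(Z_0)$.

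For the extension class I will use the standard deformation-theoretic description. A first-order deformation of $\Gamma$ inside $\cX$ over $\CC[\epsilon]$ amounts to a polynomial $f+\epsilon g$ of degree $d$ that vanishes to order $m_i$ at $z_i+\epsilon v_i$. Differentiating the condition $M(Z)f=0$ along $v$ gives $M(Z_0)g=-(\partial_vM)f$. Hence a lift exists if and only if $(\partial_vM)f\in\operatorname{im}M(Z_0)$, i.e. if and only if $\ob(v)=0$. I then need to match this class with the coboundary of $\OO_\Gamma$ in \eqref{eq:N}, sign included. The plan is to compute the connecting map $H^0(\OO_\Gamma)\to H^1(N_{\Gamma/X_0})$ in Čech terms: on a cover, local lifts of the section $1$ are local derivatives of the equation, and their differences give the cocycle. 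Comparing this with the jet description through the isomorphism above should yield $e(v)=-[(\partial_vM)f]$. The equivalence ``lifts to first order iff $e(v)=0$'' is just exactness of $H^0(N_{\Gamma/\cX})\to H^0(\OO_\Gamma)\to H^1(N_{\Gamma/X_0})$, because $1$ lifts exactly when the sequence splits on $H^0$. I expect this sign and identification bookkeeping to be the main obstacle. I would handle it via the local analysis near each $E_i$, where $f$ has multiplicity $m_i$ and the jet map picks up the Taylor coefficients.

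For part (b), $N=N_{\Gamma/\cX}$ has rank $2$ and degree $\Gamma^2=-1$, which is odd. Its subbundles are line bundles $L\subset N$; semistability fails iff some $L$ has $\deg L>-1/2$, i.e. $\deg L\ge0$. Given such an $L$, consider the composite $L\to\OO_\Gamma$.
\begin{itemize}
\item If the composite is zero, then $L\subset\OO_\Gamma(\Gamma)$, which has degree $-1$, a contradiction.
\item Otherwise $L\to\OO_\Gamma$ is a nonzero map with $\deg L\ge0$, so it is an isomorphism. Then $L$ splits the sequence and $e(v)=0$.
\end{itemize}
Conversely, if $e(v)=0$, then $\OO_\Gamma\subset N$ is a subbundle of degree $0>-1/2$, so $N$ is unstable. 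Thus $N$ is semistable (indeed stable) iff $e(v)\ne0$. Here I use that the class of the extension in $H^1(\OO_\Gamma(\Gamma))$ is $e(v)$ by part (a).
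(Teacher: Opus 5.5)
Your proposal is correct and follows the paper's proof. For (a) you use the same sequence $0\to\OO_{X_0}\to\OO_{X_0}(\Gamma)\to N_{\Gamma/X_0}\to0$ with $H^1(\OO_{X_0})=H^2(\OO_{X_0})=H^1(\PP^2,\OO(d))=0$, only making the push-forward step explicit, together with the first-order equation $M(Z_0)g=-(\partial_vM)f$; for (b) you give the same argument that a line subbundle of degree $\ge0$ must map isomorphically onto $\OO_\Gamma$ and so split \eqref{eq:N}. The \v{C}ech sign bookkeeping you defer is not carried out in the paper either, which cites Sernesi for it, and later only the fact that $e(v)$ and $\ob(v)$ vanish together is used, which your two characterizations of first-order liftability already give.
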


\begin{proof}
For (a), since $H^1(\OO_{X_0})=H^2(\OO_{X_0})=0$, the sequence $0\to\OO_{X_0}\to\OO_{X_0}(\Gamma)\to N_{\Gamma/X_0}\to0$ gives $H^1(N_{\Gamma/X_0})\cong H^1(\OO_{X_0}(\Gamma))$. The vanishing of $H^1(\PP^2,\OO(d))$ identifies this group with $\coker M$. A first-order lift is a polynomial $f_1$ with $M(Z_0)f_1=-(\partial_vM)f$, see \cite[\S3.2]{Ser}.

For (b), observe that a saturated line subbundle of $N_{\Gamma/\cX}$ other than $N_{\Gamma/X_0}$ maps non-trivially to $\OO_\Gamma$, so it has degree at most $0$. Since $\deg N_{\Gamma/\cX}=-1$, such a subbundle destabilizes if and only if it is $\OO_\Gamma$, that is, if and only if \eqref{eq:N} splits.
\end{proof}

\begin{corollary}\label{cor:red}
Suppose that some complex ten-point blow-up $X_Z$ carries a smooth irreducible curve $\Gamma\in\ls\Theta$ with $\ob\ne0$. Then $27379H-8658E$ is nef on the blow-up of $\PP^2$ at ten very general points.
\end{corollary}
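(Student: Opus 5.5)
The plan is to combine Proposition~\ref{prop:refl} with Proposition~\ref{prop:ss}(b), taking $\gamma=\Theta$ and $D=H$. Start with the complex configuration $Z$ and the curve $\Gamma\in\ls\Theta$ from the hypothesis. Since $\ob\ne0$, there is a velocity vector $v\in T_Z\Conf$ with $\ob(v)\ne0$. Choose a smooth pointed curve $(T,0)$ in the configuration space, for instance a line through $Z$ in the affine chart, with tangent vector $v$ at $0$. Let $\cX\to T$ be the blow-up of $\PP^2\times T$ along the ten resulting disjoint sections; they stay disjoint after shrinking $T$.

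By Proposition~\ref{prop:ss}(a), the extension class of \eqref{eq:N} is $e(v)=-\ob(v)\ne0$. Since $\Gamma$ is smooth with $\Gamma^2=\Theta^2=-1$, Proposition~\ref{prop:ss}(b) shows that $N_{\Gamma/\cX}$ is semistable. Its degree is $-1$ and its rank is $2$, so the slope is $-1/2$. Hence $\kappa=1/2$, which agrees with $\gamma^2=-2\kappa=-1$. Connectedness of $\Gamma$ comes from irreducibility.

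Apply Proposition~\ref{prop:refl} with $D=H$, which is nef on $X_0$. It shows that
$$
s_\Theta(H)=H+\frac{H\cdot\Theta}{1/2}\,\Theta=H+234\,\Theta=27379H-8658E
$$
is nef on $X_t$ for all but countably many $t$. Proposition~\ref{prop:refl} then gives nefness on the blow-up at ten very general points. The final arithmetic uses $117\cdot234+1=27379$ and $37\cdot234=8658$.

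The only delicate point is the passage from the abstract condition $\ob\ne0$ to an actual one-parameter family realizing the nonzero class. This requires the family's Kodaira--Spencer direction to be exactly $v$, so that \eqref{eq:N} has class $e(v)$. Choosing $T$ to be a line in configuration space with tangent $v$, with no base change, handles this, in keeping with the warning in the proof of Proposition~\ref{prop:refl}.
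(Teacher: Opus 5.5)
Your proposal is correct and follows the paper's own proof: move the points along a line in a direction $v$ with $\ob(v)\ne0$, use Proposition~\ref{prop:ss}(b) to get semistability of $N_{\Gamma/\cX}$ with slope $-\tfrac12$, and apply Proposition~\ref{prop:refl} with $D=H$ and $\kappa=\tfrac12$. Your arithmetic $H+234\Theta=27379H-8658E$ is also right.
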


\begin{proof}
Let us move the points along a line in a direction $v$ with $\ob(v)\ne0$. By Proposition~\ref{prop:ss}(b), the bundle $N_{\Gamma/\cX}$ is semistable of slope $-\tfrac12$. Now Proposition~\ref{prop:refl} applied with $D=H$ and $\kappa=\tfrac12$ gives the nefness of $s_\Theta(H)=27379H-8658E$.
\end{proof}

\section{Lifting and smoothing a finite-field divisor}\label{sec:seed}

This section supplies the arithmetic existence argument underlying Theorem~A.
Theorem~B follows from the criterion below, which separates three requirements: lifting the effectivity locus, smoothing the nodes, and preserving the obstruction rank.
Let $p$ be a prime number and let $\Conf\subset(\mathbb A^2_{\ZZ_p})^{10}$ be the scheme of ten distinct points in the affine chart $z=1$. It is smooth of relative dimension $20$ over $\ZZ_p$.

For a class $c=dH-\sum_im_iE_i$ with $d,m_i\ge0$, the entries of the interpolation matrix $M_c(Z)$ are Hasse derivatives of monomials. Hence they are polynomials with integer coefficients in the coordinates of $Z$. We denote by $L_c\subset\Conf$ the scheme defined by the minors of size equal to the number of columns, using the zero ideal if there are fewer rows; this is the locus where $h^0(c)\ge1$. We write $h^0(c)=\dim\ker M_c$ and $h^1(c)=\dim\coker M_c$. If $h^0(c)=1$, we denote by $\ob_c$ the map from Proposition~\ref{prop:ss}(a), which makes sense over any field.

\begin{theorem}[Arithmetic lifting and smoothing]\label{thm:seed}
Let $Z_0\in\Conf(\FF_p)$ be a configuration, with blow-up $X_0$, satisfying the following conditions.
\begin{enumerate}[label=\textup{(S\arabic*)}]
\item We have $h^0(X_0,\Theta)=1$, and $\ob_\Theta$ has rank $h^1(X_0,\Theta)=10$.
\item The member of $\ls\Theta$ is $\Gamma_0=\tilde A+\tilde B$. Here $A$ and $B$ are absolutely irreducible plane curves whose strict transforms $\tilde A,\tilde B$ are smooth and meet transversally at $k\ge1$ points $n_1,\dots,n_k$. Neither of them contains an exceptional curve.
\item For the classes $\alpha,\beta$ of $\tilde A,\tilde B$ we have $h^0(\alpha)=h^0(\beta)=1$, and the combined map $v\mapsto(\ob_\alpha v,\ob_\beta v)$ has rank $h^1(\alpha)+h^1(\beta)$.
\end{enumerate}
Then there exist ten distinct points $Z\subset\PP^2_\CC$ such that $X_Z$ carries a smooth irreducible curve $\Gamma\in\ls\Theta$ with $\operatorname{rank}\ob_{\Theta}=10$. Moreover, such configurations occur in a family of dimension $10$ in the space of ordered configurations, before quotienting by projective transformations.
\end{theorem}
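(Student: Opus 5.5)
The plan is to work on the determinantal schemes $L_\Theta$, $L_\alpha$, $L_\beta\subset\Conf$ over $\ZZ_p$. The first step is smoothness. Near $Z_0$, choose bases so that $M_\Theta(Z)$ is equivalent to a block matrix with an invertible block and a residual $10\times 1$ block $R(Z)$; here the $10$ comes from $h^1=10$, $h^0=1$. Then $L_\Theta$ is cut out locally by the ten entries of $R$, and the differential of $R$ at $Z_0$ is exactly $\ob_\Theta$, by Proposition~\ref{prop:ss}(a). By (S1) these ten differentials are independent mod $p$, so the Jacobian criterion makes $L_\Theta$ smooth over $\ZZ_p$ of relative dimension $20-10=10$ at $Z_0$.

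The same argument with (S3) shows that $L_{\alpha\beta}:=L_\alpha\cap L_\beta$ is smooth of relative dimension $20-h^1(\alpha)-h^1(\beta)$. Riemann--Roch gives $\chi(\Theta)=\chi(\alpha)+\chi(\beta)+\alpha\cdot\beta-1$, with $\alpha\cdot\beta=k$ and $\chi(\Theta)=-9$. Together with $h^0(\alpha)=h^0(\beta)=1$ this yields $h^1(\alpha)+h^1(\beta)=10+k$, so $L_{\alpha\beta}$ has relative dimension $10-k<10$. Clearly $L_{\alpha\beta}\subset L_\Theta$, since products of sections are sections. By semicontinuity, near $Z_0$ we keep $h^0(\Theta)=1$, so the member $\Gamma_Z$ of $\ls\Theta$ varies in a flat family over the germ of $L_\Theta$.

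The second step, which I expect to be the main obstacle, is node smoothing. Near each node $n_j$ the family $\Gamma_Z$ has a local equation $xy=t_j(Z)$. I want to show that $(t_1,\dots,t_k)\colon L_\Theta\to\mathbb A^k$ has surjective differential at $Z_0$. The kernel of $dt$ is the space of first-order deformations of $\Gamma_0$ that keep all $k$ nodes, i.e.\ the equisingular deformations. I would identify these with pairs of first-order deformations of $\tilde A$ and $\tilde B$ separately. The reason is that the equisingular normal sheaf of the nodal curve $\Gamma_0$ restricts to $N_{\tilde A}\oplus N_{\tilde B}$ on the normalization, and the obstruction class of $\Gamma_0$ along $v$ then corresponds to the pair $(\ob_\alpha v,\ob_\beta v)$. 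Hence $\ker dt=T_{Z_0}L_{\alpha\beta}$, which has dimension $10-k$. It follows that $dt$ has rank $k$, and therefore so does its reduction mod $p$. I would check this identification first on the explicit cokernels: $\coker M_\Theta$ maps to $\coker M_\alpha\oplus\coker M_\beta$ via multiplication by $g$ and $f$, where $A=\{f=0\}$ and $B=\{g=0\}$, with kernel of dimension $k$ accounted for by the nodes. If the identification fails I would fall back to a direct dimension count, using that the node-preserving locus is closed and contains $L_{\alpha\beta}$ as a component.

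The third step is to lift and conclude. Since $L_\Theta$ is smooth over $\ZZ_p$ at $Z_0$, Hensel's lemma gives many $\ZZ_p'$-points through $Z_0$ (over some finite unramified extension if needed). Choose one whose generic point lies in a dense open subset $U$ of the $10$-dimensional generic fibre. This $U$ avoids $L_{\alpha\beta}$ and all loci $t_j=0$ (proper closed by the second step), and keeps $\operatorname{rank}\ob_\Theta=10$, which is an open condition holding at $Z_0$.

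For such a point the curve $\Gamma_\eta$ is smooth: it is smooth away from the nodes because it specializes to $\Gamma_0$, and it is smoothed at each node since $t_j\neq0$. It is also geometrically irreducible. Indeed, a geometric splitting would specialize to a decomposition of $\Gamma_0$ into effective classes. Since $\tilde A$ and $\tilde B$ are irreducible and contain no exceptional curves, that decomposition must be $\alpha+\beta$, which would force the point into $L_{\alpha\beta}$.

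Finally, the set of configurations over $\overline{\QQ}$ carrying a smooth irreducible $\Gamma\in\ls\Theta$ with $\operatorname{rank}\ob_\Theta=10$ is constructible and defined over $\QQ$. It contains a $10$-dimensional open piece of $L_\Theta\otimes\overline{\QQ}_p$, and smoothness of $L_\Theta$ there gives local dimension exactly $10$. Transferring along $\overline{\QQ}\subset\overline{\QQ}_p$ and $\overline{\QQ}\subset\CC$ then gives the complex $10$-dimensional family.
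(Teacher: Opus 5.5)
Your proposal is correct. Its outline matches the paper's: Schur-complement smoothness of $L_\Theta$ and $L_\alpha\cap L_\beta$, a $p$-adic lift avoiding the bad loci, irreducibility by specialization, and transfer to $\CC$. The genuine difference is how you smooth the nodes.

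You argue infinitesimally. Suppose $v\in T_{Z_0}L_\Theta$ satisfies $d\tau_j(v)=0$ at every node (your $t_j$). Then the local equation $\xi\eta+\epsilon h$ has $h(n_j)=0$ and factors as $(\xi+\epsilon a)(\eta+\epsilon b)$, so $\tilde A$ and $\tilde B$ lift separately to first order. Hence the kernel of $d\tau$ on $T_{Z_0}L_\Theta$ is $\ker\ob_\alpha\cap\ker\ob_\beta$, of dimension $10-k$, and $d\bar\tau_1,\dots,d\bar\tau_k$ are independent. Cohomologically this is the sequence $0\to H^0(\OO_{\tilde A\cap\tilde B})\to H^1(\alpha)\oplus H^1(\beta)\to H^1(\Theta)\to0$, in which $(x,y)\mapsto gx+fy$ sends $(\ob_\alpha v,\ob_\beta v)$ to $\ob_\Theta v$. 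The map goes in this direction, not out of $\coker M_\Theta$ as you wrote.

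The paper proves only the weaker statement $\tau_j\notin pR$ (Lemma~\ref{lem:nodes}), by contradiction. If $\bar\tau_1=0$, then $V(\bar\tau_2,\dots,\bar\tau_k)$ has dimension $>10-k$, so some curve germ outside $L_\alpha\cap L_\beta$ keeps every node. Normalizing along the nodes gives a smooth proper family with disconnected special fibre, whose generic member is then reducible by Stein factorization, contradicting Lemma~\ref{lem:spec}.

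Your route gives more: the node-persistence loci are smooth, transverse, and meet exactly in $L_\alpha\cap L_\beta$. It costs the local splitting of equisingular first-order deformations. The paper's route needs no computation of $d\tau_j$ and reuses Lemma~\ref{lem:spec}, which it needs anyway for irreducibility. Your fallback dimension count presupposes that $L_\alpha\cap L_\beta$ is a component of the node-preserving locus, and that is exactly what the Stein-factorization step supplies.

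Two smaller points. First, your irreducibility argument (specialize the components of a splitting in the regular model and compare classes) is a valid substitute for the paper's Gauss-lemma proof of Lemma~\ref{lem:spec}. Second, the $\tau_j$ are only formal functions, so choosing the lift needs an extra argument. Either use the paper's Strassmann argument on $(p\ZZ_p)^{10}$, or replace $\{\tau_j=0\}$ by the algebraic locus of singular members.
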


Let us observe that by the Riemann--Roch theorem
$$
h^1(\alpha)+h^1(\beta)=h^1(\Theta)+\alpha\cdot\beta=10+k ,
$$
since $h^0=1$ and $h^2=0$ for all three classes. Thus condition (S3) requires the maximal possible rank $10+k>10$. The heart of the argument is that the locus of configurations where the two components survive has dimension $10-k$, which is strictly smaller than the dimension $10$ of the locus where $\Theta$ is effective.

\subsection{Smoothness of the effectivity loci}

\begin{lemma}\label{lem:smooth}
Let $c_1,\dots,c_s$ be interpolation classes with $h^0(X_0,c_i)=1$ and $h^1(X_0,c_i)=r_i$, and suppose that the combined obstruction map has rank $r=\sum_ir_i$. Then, near $Z_0$, the scheme $\bigcap_iL_{c_i}$ is smooth over $\ZZ_p$ of relative dimension $20-r$. Moreover, at each of its points we have $h^0(c_i)=1$, and $\ob_{c_i}$ has rank $r_i$.
\end{lemma}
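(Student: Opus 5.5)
The plan is a Jacobian-criterion argument for determinantal loci, carried out near $Z_0$ over $\ZZ_p$. Fix one class $c=c_i$ and write $M=M_c(Z)$, an $R\times C$ matrix of polynomials with integer coefficients in the coordinates of $Z$; here $C=\binom{d+2}{2}$ and $R=\sum_j\binom{m_j+1}{2}$. At $Z_0$ the kernel is spanned by $f_0$, and the cokernel has dimension $r_c=R-C+1$. First I choose bases adapted to $Z_0$: since $M(Z_0)$ has rank $C-1$, after row and column operations over $\ZZ_p$ I can write
$$M(Z)=\begin{pmatrix}P(Z)&Q(Z)\\ S(Z)&T(Z)\end{pmatrix},$$
where $P$ is $(C-1)\times(C-1)$ and invertible near $Z_0$ (its reduction mod $p$ is invertible), and the last column corresponds to $f_0$. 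Then $M(Z_0)$ is reduced to $\operatorname{diag}(I,0)$ after these operations. By Schur complement, $M$ has rank at most $C-1$ exactly when the $(R-C+1)\times1$ column $G(Z)=T-SP^{-1}Q$ vanishes. Hence, in a Zariski (or $p$-adic/henselian) neighbourhood of $Z_0$, the ideal of maximal minors defining $L_c$ equals the ideal generated by the $r_c$ entries of $G$.

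The key step is to identify the differential of $G$ at $Z_0$ with the obstruction map. Since $S(Z_0)=0$ and $Q(Z_0)=0$, we get $dG_{Z_0}(v)=\partial_vT$, and this is the image in $\coker M(Z_0)$ of $(\partial_vM)f_0$, i.e.\ $\ob_c(v)$ from Proposition~\ref{prop:ss}(a), read in the chosen basis of the cokernel. This computation is purely formal and valid over any base, in particular over $\FF_p$. Stacking the $G$'s for $c_1,\dots,c_s$ gives $r=\sum r_i$ equations in the smooth $\ZZ_p$-scheme $\Conf$ of relative dimension $20$. Their differential on the special fibre at $Z_0$ is the combined obstruction map, which by hypothesis has rank $r$, i.e.\ is surjective. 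By the Jacobian criterion for smoothness over $\ZZ_p$, the intersection $\bigcap_iL_{c_i}$ is smooth of relative dimension $20-r$ at $Z_0$, hence on an open neighbourhood.

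For the final assertions, surjectivity of the combined differential is an open condition, so it persists at nearby points $Z$ of $\bigcap_iL_{c_i}$, in any residue characteristic. Also $P$ stays invertible, so $\operatorname{rank}M_{c_i}(Z)\ge C-1$; together with $G=0$ this forces $h^0(c_i)=1$ and $h^1(c_i)=r_i$. The same formula $dG_Z(v)=\partial_vT-\dots$ then needs to be rechecked away from $Z_0$. Here the extra terms coming from $S,Q\ne0$ should be absorbed by re-choosing the adapted bases at $Z$ itself, which gives $dG_Z=\ob_{c_i}$ up to isomorphism. The rank of each individual $\ob_{c_i}$ is therefore $r_i$, since the combined map is surjective onto $\bigoplus_i\coker$.

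The main obstacle I expect is making the localisation rigorous integrally. One must ensure that the Schur-complement reduction really computes the scheme of maximal minors, not merely its reduced locus, over a neighbourhood in $\Conf$ rather than only over the special fibre. The identification of the differential also has to be intrinsic, i.e.\ independent of the basis choices. I would handle this by working in the local ring $\OO_{\Conf,Z_0}$, where $P$ is invertible, and showing directly that the maximal minors generate the same ideal as the entries of $G$.
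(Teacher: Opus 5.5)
Your proposal is correct and is essentially the paper's argument: you reduce $M_{c_i}$ over the local ring at $Z_0$ to $\operatorname{diag}(1,G)$, observe that the maximal minors generate the ideal of the entries of the Schur complement $G$, identify $dG$ at $Z_0$ with $\ob_{c_i}$, and conclude by the Jacobian criterion. For the step you leave vague at nearby points, the paper's device settles it without re-choosing bases: write $G=wMf$ with $f=(-P^{-1}Q,1)^t$ and $w=(-SP^{-1},1)$; since $Mf=0$ and $wM=0$ wherever $G=0$, you get $d_vG=w(\partial_vM)f=\ob_{c_i}(v)$ at every point of $L_{c_i}$ near $Z_0$.
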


\begin{proof}
Let $\OO$ be the local ring of $\Conf$ at $Z_0$, and write $M=M_{c_i}$ in block form with $M_{11}$ invertible over $\OO$ of size $\operatorname{rank}M(Z_0)$. Row and column operations bring $M$ to the form $\operatorname{diag}(1,S)$, where
$$
S=M_{22}-M_{21}M_{11}^{-1}M_{12}
$$
is the Schur complement, a column of length $r_i$. The maximal minors of $M$ generate the ideal of the entries of $S$. Put $f=(-M_{11}^{-1}M_{12},1)^t$ and $W=(-M_{21}M_{11}^{-1},1)$, so that $S=WMf$. At $Z_0$ we have $Mf=0$ and $WM=0$, hence $d_vS=W(\partial_vM)f=\ob_{c_i}(v)$ in the basis $W$ of the left kernel. Therefore the $r$ Schur entries have linearly independent relative differentials modulo $p$. Together with $p$ they form a part of a regular system of parameters, and the claim follows from the Jacobian criterion.
\end{proof}

By (S1), (S3) and Lemma~\ref{lem:smooth}, near $Z_0$ the scheme $L_\Theta$ is smooth of relative dimension $10$ and $\ob_\Theta$ has rank $10$ on it, while $L_\alpha\cap L_\beta\subset L_\Theta$ is smooth of relative dimension $10-k$. Let $R=\widehat\OO_{L_\Theta,Z_0}\cong\ZZ_p[[u_1,\dots,u_{10}]]$, and let $\fa\subset R$ be the ideal of $L_\alpha\cap L_\beta$; in particular $\fa\ne0$.

Over $\operatorname{Spec}R$, the kernel line of $M_\Theta$ gives a section of $\OO(\Theta)$ on the blown-up family, which is non-zero on every fibre. Its zero scheme $\cC$ is flat and proper over $R$. Its special fibre is $\Gamma_0$: no exceptional curve splits off, since $\mult_zA+\mult_zB=37$ at every point $z\in Z_0$. By (S2), $\Gamma_0$ is a nodal curve with nodes $n_1,\dots,n_k$ and connected dual graph.

\subsection{The reducible locus and the node-smoothing parameters}

\begin{lemma}\label{lem:spec}
Let $D$ be a complete discrete valuation ring with residue field finite over $\FF_p$, and let $Z\colon\operatorname{Spec}D\to L_\Theta$ be a morphism sending the closed point to $Z_0$. If the geometric generic member of the corresponding family is reducible, then the generic point of $Z$ lies in $L_\alpha\cap L_\beta$.
\end{lemma}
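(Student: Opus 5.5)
We argue by specialization of components. Let $K$ be the fraction field of $D$, $\bar K$ an algebraic closure, and $\cC_D\subset\cX_D$ the pulled-back zero scheme of the kernel section, flat and proper over $D$ with special fibre $\Gamma_0=\tilde A+\tilde B$. Suppose $\cC_{\bar K}=C_1+C_2+\dots$ is reducible. After a finite extension $D'\supset D$, which is again a complete DVR with finite residue field (the integral closure of $D$ in a finite extension of $K$), all components are defined over $K'$. Grouping them, we obtain two nonzero effective divisors $C',C''$ with $\cC_{K'}=C'+C''$. Their closures $\overline{C'},\overline{C''}$ in $\cX_{D'}$ are flat over $D'$, and their special fibres are effective divisors adding up to $\Gamma_0$, since the closure of $\cC_{K'}$ is $\cC_{D'}$ by flatness.

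Because $\Gamma_0$ is reduced with exactly the two irreducible components $\tilde A,\tilde B$, and neither special fibre can be zero (flatness preserves the nonzero class), the special fibres must be $\tilde A$ and $\tilde B$ in some order. The Néron--Severi lattice is constant in the family and the class is constant in flat families, so $C'$ has class $\alpha$ and $C''$ has class $\beta$. This is the key point: the possible decompositions of $\Gamma_0$ force any generic decomposition to be numerically $\alpha+\beta$. Consequently $h^0(\alpha)\ge1$ and $h^0(\beta)\ge1$ on the generic fibre $X_{Z_{K'}}$, so the generic point of $\operatorname{Spec}D'$ maps into $L_\alpha\cap L_\beta$. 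Here we use that $L_c$ is the rank-drop locus of $M_c$, and that effectivity of $c$ on the blow-up of $Z$ is equivalent to $h^0(c)\ge1$, so no exceptional curve splits off. It remains to note that the classes $\alpha,\beta$ have nonnegative coefficients and their curves contain no exceptional curve, so their sections come from plane polynomials with the required vanishing. Since $L_\alpha\cap L_\beta$ is a closed subscheme of $\Conf$ and $\operatorname{Spec}K'\to\operatorname{Spec}K$ is surjective, the generic point of $Z$ itself lies in $L_\alpha\cap L_\beta$.

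The main obstacle I expect is the first step, namely justifying that the special fibres of the closures really partition $\Gamma_0$ as divisors. The plan is to use that a flat closure of an effective Cartier divisor on the regular total space $\cX_{D'}$ is an effective Cartier divisor whose restriction to the special fibre is its specialization. Additivity $\overline{C'}+\overline{C''}=\cC_{D'}$ then holds, because both sides are flat closures of the same generic divisor. If regularity of $\cX_{D'}$ is a concern, it is a blow-up of $\PP^2_{D'}$ along disjoint sections and hence smooth over $D'$, so Cartier-ness is automatic. A minor secondary point is multiplicities. If some $C_i$ appeared with multiplicity at least $2$, its specialization would give a nonreduced part of $\Gamma_0$, which is impossible. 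Thus the grouping into two reduced pieces is legitimate, and every component specializes into exactly one of $\tilde A,\tilde B$.
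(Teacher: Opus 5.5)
Your proof is correct, but it works on the blown-up total space, whereas the paper's proof stays in the plane.

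The paper takes a primitive equation $f_Z$ over $D$ and factors it as $f_Z=gh$ over a finite extension by Gauss's lemma. It identifies $\{\bar g,\bar h\}$ with $\{A,B\}$ by absolute irreducibility. It then determines the classes by a squeeze: Hasse derivatives give $\mult_zg\le\mult_{\bar z}\bar g$, while $\mult_zg+\mult_zh\ge37=\mult_{\bar z}A+\mult_{\bar z}B$. Hence $g$ and $h$ lie directly in $\ker M_\alpha$ and $\ker M_\beta$.

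You instead take flat closures of the generic components in the smooth $D'$-scheme $\cX_{D'}$. You use additivity of specialization of Cartier divisors to split $\Gamma_0$. The multiplicity squeeze is replaced by constancy of $\operatorname{Pic}(\cX_{D'})\cong\ZZ H\oplus\bigoplus_i\ZZ E_i$ on both fibres.

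Your route treats every kind of reducibility uniformly. In particular, an exceptional curve splitting off generically is excluded automatically, because it would specialize to a component not present in $\Gamma_0$. The paper tacitly equates reducibility of the member with a factorization of $f_Z$; that case is ruled out there by the same semicontinuity, since $\mult_{z_i}f_Z\le37$. The paper's route is more elementary and stays in the interpolation-matrix language that defines $L_\alpha$ and $L_\beta$. It needs no scheme-theoretic closures, no regularity of the total space, and no descent of geometric components to $K'$.

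One small clean-up: your appeals to ``no exceptional curve splits off'' in the last step are unnecessary. For $c=dH-\sum_im_iE_i$ with $m_i\ge0$, the space $H^0(X,\OO(c))$ is the space of degree-$d$ forms with multiplicity at least $m_i$ at $z_i$, whether or not members contain exceptional curves.
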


\begin{proof}
Let $f_Z$ be a primitive equation over $D$. Its reduction spans $\ker M_\Theta(Z_0)$, so it is a unit times $AB$. After a finite extension of $D$ we can write $f_Z=gh$ with $g,h$ primitive and non-constant, by Gauss's lemma. The reductions $\bar g,\bar h$ are homogeneous of the same degrees as $g,h$. Since $A$ and $B$ are absolutely irreducible and not associated, we get $\{\bar g,\bar h\}=\{A,B\}$ up to units, say $\bar g=A$.

Moreover, Hasse derivatives reduce to Hasse derivatives, so $\mult_zg\le\mult_{\bar z}\bar g$ at every point. Combined with the inequality $\mult_zg+\mult_zh\ge37=\mult_{\bar z}A+\mult_{\bar z}B$, this shows that all these inequalities are equalities. Hence $g$ has class $\alpha$ and $h$ has class $\beta$, which completes the proof.
\end{proof}

\begin{lemma}\label{lem:nodes}
Formally at $n_j$, the family $\cC$ is given by $\xi\eta=\tau_j$ with $\tau_j\in\fm_R$. Moreover, $\tau_j\notin pR$ for every $j$.
\end{lemma}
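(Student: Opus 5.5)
The plan has two parts. First I will establish the local normal form at each node. Then I will prove the non-divisibility by $p$ by comparing the first-order deformations with the obstruction maps.

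For the normal form: $\cC\to\operatorname{Spec}R$ is flat and proper, and its special fibre $\Gamma_0$ has an ordinary node at $n_j$, since $\tilde A$ and $\tilde B$ are smooth and meet transversally there by (S2). Moreover, $\cC$ is a relative effective Cartier divisor in the smooth $R$-scheme $\cX_R$, which has relative dimension $2$. Standard deformation theory of the node then gives complete-local coordinates $\xi,\eta$ on $\widehat\OO_{\cX_R,n_j}\cong R[[\xi,\eta]]$ in which the equation becomes $\xi\eta-\tau_j$, up to a unit. The residue field is finite, hence perfect, and I will use $\mathrm{char}\ne2$ (here $p=127$) in the Morse-lemma step; the versal deformation of $\xi\eta=0$ is $\xi\eta=t$. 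The element $\tau_j$ is the pull-back of $t$ under the classifying map. It lies in $\fm_R$ because the special fibre is nodal.

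For the claim $\tau_j\notin pR$, I will argue by contradiction, assuming $\tau_j\in pR$.
\begin{enumerate}[label=(\roman*)]
\item Reduce modulo $p$. On $\bar R=R/pR\cong\FF_p[[u_1,\dots,u_{10}]]$ the node $n_j$ then persists over the whole of $\operatorname{Spec}\bar R$, so the equicharacteristic family $\bar\cC$ is equisingular at $n_j$.
\item Look at first order. For every tangent vector $v$ of $L_\Theta\otimes\FF_p$ at $Z_0$, the first-order deformation of $\Gamma_0$ then preserves the node $n_j$.
\item Linearize the locus where the node survives. Let $I_{n_j}$ be the ideal of $n_j$. First-order deformations of $\Gamma_0$ preserving $n_j$ lift, locally, to deformations of the pair of branches, that is, of $\tilde A$ and $\tilde B$ separately near $n_j$. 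Globally, the section $f$ decomposes along $AB$ through the locally free sheaves, and the relevant vanishing comes from $H^1$ of $\OO(\Gamma_0)\otimes I_{n_j}$.
\item Compare with the obstruction maps. Condition (S1) says $T_{Z_0}L_\Theta=\ker\ob_\Theta$, of dimension $10$. Condition (S3) says $T_{Z_0}(L_\alpha\cap L_\beta)=\ker(\ob_\alpha,\ob_\beta)$, of dimension $10-k$.
\item Use the counting in the Riemann--Roch identity $h^1(\alpha)+h^1(\beta)=h^1(\Theta)+\alpha\cdot\beta$. Under the natural map $H^1(\alpha)\oplus H^1(\beta)\to H^1(\Theta)$, the $k$ node conditions correspond exactly to the $k$-dimensional kernel. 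Concretely, from $0\to\OO_{\tilde A}(\alpha)\oplus\OO_{\tilde B}(\beta)\to$ (modified) $\to\bigoplus_j k(n_j)\to0$, each node contributes one linear form $\lambda_j$ on $T_{Z_0}L_\Theta$. The form $\lambda_j$ is $d\tau_j$ modulo $p$.
\item Show that the forms $\lambda_j$ are linearly independent on $\ker\ob_\Theta$. This is exactly the statement that $\ker(\ob_\alpha,\ob_\beta)$ has codimension $k$ in $\ker\ob_\Theta$, which holds by the rank hypotheses.
\item Conclude. In particular each $d\tau_j\not\equiv0$ modulo $(p,\fm_R^2)$, so $\tau_j$ has a nonzero linear term in the $u$-variables. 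Hence $\tau_j\notin pR$, since every element of $pR\cap\fm_R$ has zero differential in the $u$-variables modulo $p$.
\end{enumerate}

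A possibly cleaner route to the same conclusion uses Lemma~\ref{lem:spec}. If $\tau_j\in pR$, then over a DVR $D$ mapping to $R$ with generic point in $\operatorname{Spec}(R/\tau_j)$ but not in $V(\fa)$, the generic member would keep a node without splitting. I prefer the tangent-space argument, because it isolates the key identity $d\tau_j\equiv\lambda_j$.

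The main obstacle is step (v): identifying the derivative of the smoothing parameter $\tau_j$ with the component of $\ob_\alpha\oplus\ob_\beta$ modulo $\ob_\Theta$. This requires the exact sequence relating $\OO_{\Gamma_0}(\Gamma_0)$ to the normal bundles of $\tilde A$ and $\tilde B$, twisted by the nodes, namely
\[
0\to\OO_{\tilde A}(\Gamma_0)(-\textstyle\sum n_j)\to\OO_{\Gamma_0}(\Gamma_0)\to\OO_{\tilde B}(\Gamma_0)\to0,
\]
together with the compatibility of $\ob$ with restriction. I will first check this compatibility at the level of interpolation matrices. There, $\ob_\Theta(v)=[(\partial_vM_\Theta)(AB)]$ should decompose by the Leibniz rule for Hasse derivatives as $B\cdot\ob_\alpha(v)+A\cdot\ob_\beta(v)$, mapped into $\coker M_\Theta$.
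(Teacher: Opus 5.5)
Your argument is correct, but it is infinitesimal where the paper's is global. The paper assumes $\tau_1\in pR$ and argues as follows.
\begin{itemize}
\item The locus $V(\bar\tau_2,\dots,\bar\tau_k)\subset\operatorname{Spec}\bar R$ then has dimension at least $11-k>10-k=\dim(L_\alpha\cap L_\beta)_{\FF_p}$.
\item So there is a curve germ in this locus outside $L_\alpha\cap L_\beta$, along which all nodes persist.
\item Normalizing along the node sections gives a smooth proper family with disconnected special fibre.
\item Stein factorization and Lemma~\ref{lem:spec} then force the germ back into $L_\alpha\cap L_\beta$, a contradiction.
\end{itemize}
You instead show that $d\bar\tau_1,\dots,d\bar\tau_k$ are linearly independent on $T=\ker\ob_\Theta=T_{Z_0}(L_\Theta)_{\FF_p}$. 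This avoids the Gauss-lemma specialization and the connected-components argument. It also gives a stronger output: the $\bar\tau_j$ are part of a regular system of parameters of $\bar R$, so the locus where all nodes persist is smooth of codimension $k$. The paper's route needs no first-order bookkeeping, and it reuses Lemma~\ref{lem:spec}, which the proof of Theorem~\ref{thm:seed} needs anyway.

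Step (v), which you single out as the main obstacle, is easier than your plan suggests. You do not need to identify $d\bar\tau_j$ with a coordinate of $\ker\bigl(H^1(\alpha)\oplus H^1(\beta)\to H^1(\Theta)\bigr)$, nor any $H^1$-vanishing for $\OO(\Gamma_0)\otimes I_{n_j}$. Only the inclusion $\bigcap_j\ker d\bar\tau_j\subseteq\ker(\ob_\alpha,\ob_\beta)$ is needed. The right-hand side has codimension $k$ in the $10$-dimensional space $T$, and $k$ linear forms whose common kernel has codimension $k$ are independent.

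To prove the inclusion, take an affine neighbourhood $U_j$ of $n_j$ on which the first-order family is trivial, $\tilde A=(a)$, $\tilde B=(b)$, and $(a,b)$ is the ideal of $n_j$. The deformation along $v$ is $ab+\epsilon g$, and $d\bar\tau_j(v)$ equals $g(n_j)$ up to a unit. If $g(n_j)=0$ for \emph{every} $j$, then $g=ag_1+bg_2$ and $ab+\epsilon g\equiv(a+\epsilon g_2)(b+\epsilon g_1)$. These factors glue with $\Gamma_\epsilon$ away from $\tilde A\cap\tilde B$ to first-order lifts of $\tilde A$ and $\tilde B$. Since $H^1(\OO_{X_0})=0$, their line bundles are the extensions of $\alpha$ and $\beta$, so $\ob_\alpha(v)=\ob_\beta(v)=0$.

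Preserving a single node, as phrased in (iii), does not split the deformation; you correctly use all nodes in (vi). The same issue affects your alternative route: for $k\ge2$, a germ that keeps only $n_j$ and does not split is no contradiction. This is why the paper takes the germ inside $V(\bar\tau_2,\dots,\bar\tau_k)$.

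Finally, writing $\widehat\OO_{\cX_R,n_j}\cong R[[\xi,\eta]]$ assumes $n_j$ is rational. In general one first makes an unramified extension of $\ZZ_p$, as the paper does.
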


\begin{proof}
After an unramified extension of $\ZZ_p$, which does not affect the argument, the nodes and their branches are rational. The first assertion is then the versal deformation of a split node.

Suppose now that $\tau_1\in pR$. In $\bar R=R/p\cong\FF_p[[u]]$, the locus $V(\bar\tau_2,\dots,\bar\tau_k)$ has dimension at least $10-(k-1)$. This is bigger than $\dim(L_\alpha\cap L_\beta)_{\FF_p}=10-k$, so there is a curve germ in this locus which is not contained in $L_\alpha\cap L_\beta$. Let $D$ be its normalization. Over $D$ all the nodes persist, so near each $n_j$ the family is $D[[\xi,\eta]]/(\xi\eta)$, and away from the nodes it is smooth over $D$.

Normalizing along the node sections, we obtain a proper smooth family over $D$ with special fibre $\tilde A\sqcup\tilde B$. In such a family the number of geometric connected components is locally constant, since the Stein factorization is finite \'etale. Hence the generic member is reducible, and Lemma~\ref{lem:spec} shows that the germ lies in $L_\alpha\cap L_\beta$, a contradiction.
\end{proof}

\subsection{Passage to characteristic zero}

We choose a lift off the locus preserving both components and off every locus where a node persists.

\begin{proof}[Proof of Theorem~\ref{thm:seed}]
If an unramified extension was used to split the nodes, we make that extension throughout the following argument, replacing $\ZZ_p$ by its ring of integers; we retain the same notation.
Choose a non-zero element $a\in\fa$. Since $R$ is a domain, $F=a\prod_j\tau_j$ is non-zero. A non-zero power series in $\ZZ_p[[u]]$ does not vanish identically on $(p\ZZ_p)^{10}$; this follows from Strassmann's theorem by induction on the number of variables. Hence there is $u^0\in(p\ZZ_p)^{10}$ with $F(u^0)\ne0$, which defines a point $Z\in L_\Theta(\ZZ_p)$ over $Z_0$.

Since $a(Z)\ne0$, Lemma~\ref{lem:spec} shows that $\Gamma_Z$ is geometrically irreducible. By properness, a singular point of $\Gamma_Z$ over $\CC_p$ would specialize to one of the nodes $n_j$. But near $n_j$ the curve $\Gamma_Z$ is given by $\xi\eta=\tau_j(Z)\ne0$, so it is smooth there. Hence $\Gamma_Z$ is smooth. Furthermore, $\ob_\Theta$ has rank $10$ at $Z$ by Lemma~\ref{lem:smooth}.

To pass to $\CC$, consider the finite-type effectivity scheme defined over $\QQ$ by the same interpolation minors.
On the open locus where the kernel is a line, it carries the corresponding universal divisor on the blown-up family.
Smoothness and geometric irreducibility of its fibres, together with the rank-$10$ condition on $\ob_\Theta$, define an open locus after shrinking around the point just constructed; see \cite[9.7.7]{EGA} for geometric irreducibility in families.
Our construction gives a point of this locus over a finite extension of $\QQ_p$ (allowing the unramified extension used to split the nodes).
It is therefore a nonempty finite-type scheme over $\QQ$, and has a point over $\overline{\QQ}$, hence over $\CC$.
The same Jacobian calculation as in Lemma~\ref{lem:smooth} makes this locus smooth of dimension $10$.
This proves the claimed dimension without requiring an embedding of a $p$-adic field into $\CC$.
\end{proof}

\section{Supporting calculations over \texorpdfstring{$\FF_{127}$}{F\_127}}\label{sec:cert}

Let $p=127$ and $\omega=19$, which is a primitive cube root of unity modulo $127$. Let $g=\operatorname{diag}(\omega,\omega^2,1)$. In the affine chart $z=1$ we put
$$
P=(0,0),\quad q_1=(1,1),\quad q_2=(1,68),\quad q_3=(11,61),
$$
and we consider the configuration of ten points
$$
Z_0=\{P\}\cup\langle g\rangle q_1\cup\langle g\rangle q_2\cup\langle g\rangle q_3 .
$$

The coordinates below specify the entire input to the certificate.
The required checks concern dimensions of three kernels, ranks of first-order obstruction maps, and the geometry of two plane factors.
All ranks in this section are computed over $\FF_{127}$.
The search used to find the input is described separately in Appendix~\ref{sec:constraints}; its exhaustiveness is not needed for the proof.

\begin{theorem}\label{thm:cert}
The configuration $Z_0$ satisfies conditions \textup{(S1)--(S3)} of Theorem~\ref{thm:seed} with $k=2$. More precisely, the following hold.
\begin{enumerate}[label=\textup{(\alph*)}]
\item $h^0(\Theta)=1$, $h^1(\Theta)=10$, and $\ob_\Theta$ has rank $10$. The unique member $f_0=AB$ is $g$-invariant.
\item The curve $A$ has degree $12$, multiplicity $5$ at $P$, multiplicity $4$ at the points of $\langle g\rangle q_1\cup\langle g\rangle q_2$, and multiplicity $3$ at the points of $\langle g\rangle q_3$; so $\alpha^2=-4$ and $\tilde A\cong\PP^1$. The curve $B$ has degree $105$ and multiplicities $32$, $33$, $34$ at the same sets of points; so $\beta^2=-1$ and $\tilde B$ has genus $9$.
\item Both $A$ and $B$ are absolutely irreducible. At the ten points they have ordinary multiple points, and they have no other singular points, neither affine nor at infinity.
\item We have $i_P(A,B)=5\cdot32+2$, and $i_z(A,B)=\mult_zA\cdot\mult_zB$ at the remaining nine points; these numbers sum up to $12\cdot105$. The tangent cones of $A$ and $B$ at $P$ have greatest common divisor $xy$. Hence $\tilde A$ and $\tilde B$ meet transversally at two points of $E_P$, and $\alpha\cdot\beta=2$.
\item $h^0(\alpha)=h^0(\beta)=1$, $h^1(\alpha)=3$, $h^1(\beta)=9$, the obstruction maps have ranks $3$ and $9$, and together they have rank $12$.
\end{enumerate}
\end{theorem}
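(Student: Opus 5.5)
The proof is a certified finite computation over $\FF_{127}$. We organize it so that every assertion reduces to exact linear algebra or polynomial arithmetic modulo $127$, with the symmetry $g$ used only as a consistency check.

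\textbf{Step 1: the $\Theta$ system.} We first assemble the interpolation matrix $M_\Theta(Z_0)$. Its columns are the $\binom{119}{2}=7021$ monomials of degree at most $117$. Its rows are the Hasse derivatives of order less than $37$ at the ten points, giving $10\binom{38}{2}=7030$ rows. Gaussian elimination modulo $127$ should give rank $7020$, so $h^0=1$, and hence $h^1=7030-7020=10$, in agreement with Riemann--Roch.

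\textbf{Step 2: the obstruction map for $\Theta$.} We take the kernel vector $f_0$ and compute a basis of the left kernel $W$. For each of the $20$ coordinate velocities $v$ we form $W(\partial_vM)f_0$, as in the proof of Lemma~\ref{lem:smooth}. This gives a $10\times20$ matrix, whose rank we check to be $10$. The derivatives $\partial_v$ of Hasse-derivative entries are again explicit integer polynomials, so nothing leaves $\FF_{127}$. Finally, $g$-invariance of $f_0$ is checked by comparing $f_0\circ g$ with $f_0$ up to a scalar.

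\textbf{Step 3: factoring the member and checking geometry.} We factor $f_0$ over $\FF_{127}$ and obtain factors $A$ and $B$ of degrees $12$ and $105$.
\begin{itemize}
\item Absolute irreducibility: we check that $f_0$ has no factorization over a larger finite field, or equivalently that each factor stays irreducible after a generic linear change of coordinates and a reduction to a bivariate Hensel/absolute factorization test. A cheaper sufficient criterion is a smooth $\FF_{127}$-point of the curve combined with irreducibility over $\FF_{127}$.
\item Multiplicities: we read them off from the lowest-order homogeneous parts at each point after translation. Ordinariness of each multiple point means the tangent cone is squarefree, which we test with a discriminant or gcd against the derivative.
\item No other singularities: we check that the ideal $(F,F_x,F_y)$ is supported only at the ten points. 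Concretely, we compute a Gröbner basis or resultants in the affine chart, saturate by the ideals of the ten points, and verify that the result is the unit ideal. We repeat this on the chart at infinity.
\item Intersection numbers: we compute $i_P(A,B)$ by local resultants, or as the $\FF_{127}$-dimension of the local algebra. The tangent-cone gcd $xy$ identifies two common tangent directions, and the excess $2$ forces transversal simple meetings there.
\end{itemize}
The genus and self-intersection numbers then follow by arithmetic from the multiplicities. As a consistency check, the total intersection is $\sum i_z=1260$.

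\textbf{Step 4: repeating for $\alpha$ and $\beta$.} We carry out Steps 1 and 2 for the classes $\alpha$ and $\beta$, and then stack the two obstruction matrices into a $12\times20$ matrix whose rank we check to be $12$.

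The main obstacle is Step 3 for $B$. Certifying that a degree-$105$ curve has only the prescribed ordinary singularities, and is absolutely irreducible, requires a nontrivial elimination. My first choice is resultants with respect to a random coordinate, computed modulo $127$. If this is too heavy, I would instead argue by genus: compute $p_a(\tilde B)=9$, and show that $\tilde B$ is smooth by checking that $F_x,F_y,F$ have no common zero off the ten points via a dimension count of the quotient. Irreducibility would then follow from connectedness, since $h^0(\OO_{\tilde B})=1$ is implied by $h^0(\beta)=1$ and $\beta^2<0$.
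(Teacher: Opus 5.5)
Your main route is correct and is essentially the paper's first method. The steps match: exact rank computations for the interpolation matrices ($7030\times7021$, $93\times91$, $5679\times5671$) modulo $127$, and the obstruction map obtained by pairing $(\partial_vM)f$ with a basis of the left kernel. After that come factorization of $f_0$, location of singularities through the Jacobian ideal, and absolute irreducibility from irreducibility over $\FF_{127}$ together with a smooth $\FF_{127}$-point off $Z_0$. For the singularities, the paper compares the global length of $R/(F,F_x,F_y)$ with the sum of local lengths at $Z_0$. That is equivalent to your saturation test. The paper additionally repeats every computation independently with standard bases in \textsc{Singular}.

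One correction concerns your fallback in the last paragraph. The conditions $h^0(\beta)=1$ and $\beta^2<0$ do not imply $h^0(\OO_{\tilde B})=1$. Indeed $h^0(\OO_{\tilde B})=1+h^1(\OO_X(-\tilde B))$, and nothing controls the last term. For example, on the blow-up at three general points, the unique member of $2H-2E_1-E_2-E_3$ is $\overline{p_1p_2}\cup\overline{p_1p_3}$. Its strict transform is smooth and disconnected. So the connectedness argument cannot replace the irreducibility-over-$\FF_{127}$-plus-smooth-rational-point criterion, which is the one the paper uses.
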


\begin{proof}
All the properties listed above were computed twice, by two independent methods; the results are summarized in Table~\ref{tab:cert}.

In the first method we work with the interpolation matrices modulo $127$. These have size $7030\times7021$ for $\Theta$ and $5679\times5671$ for $\beta$. Their ranks were computed with our own elimination routine and with \textsc{FLINT} \cite{FLINT}; the factorization and the resultants were also computed with \textsc{FLINT}.

The second method does not use interpolation matrices at all, and it was carried out in \textsc{Singular} \cite{Sing}. Since the ideals $\fm_z^{m_z}$ are pairwise comaximal, the fat point ideal $I=\bigcap_z\fm_z^{m_z}$ is equal to the product $\prod_{j\ge1}I(S_j)$, where $S_j=\{z:m_z\ge j\}$ and $I(S_j)$ is the ideal of these points. We computed a standard basis of $I$ and checked that $\dim R/I=7030$. For a degree ordering, the image in $R/I$ of the polynomials of degree at most $d$ is spanned by the standard monomials of degree at most $d$. Therefore $h^0$ and $h^1$ are obtained by counting standard monomials, and the part of degree bigger than $d$ of a normal form realizes the projection $R/I\to\coker M$.

Moreover, since $\partial_xf\in\fm_z^{m_z-1}$, the obstruction of the motion $\partial/\partial x_z$ is represented by the part of degree bigger than $d$ of
$$
\NF\Bigl(\partial_xf\cdot\prod_{w\ne z}L_w^{m_w}\Bigr)\Big/\prod_{w\ne z}L_w(z)^{m_w},
$$
where $L_w$ is a line passing through $w$ and not through $z$. In order to check that $A$ and $B$ have no singular points outside $Z_0$, we compared the length of $R/(A,A_x,A_y)$ with the sum of the local lengths at the ten points. We obtained $81$ for $A$ and $8092$ for $B$ in both cases, and we checked the points at infinity after homogenizing. For the irreducibility check, one first verifies that $A$ and $B$ are irreducible over $\FF_{127}$.
Each also has a smooth $\FF_{127}$-rational point outside $Z_0$.
These two facts together imply absolute irreducibility: Frobenius acts transitively on the geometric components of an irreducible curve, whereas the unique geometric component through a smooth rational point is Frobenius-stable.
A smooth rational point alone would not suffice without the first check.

As a consistency check, the eight infinitesimal projective motions lie in all three kernels. The stacked matrix $[\ob_\Theta;\ob_\alpha;\ob_\beta]$ has rank $12$, which means that $\ker\ob_\alpha\cap\ker\ob_\beta\subset\ker\ob_\Theta$.
\end{proof}

\begin{table}[h]
\centering\small
\begin{tabular}{@{}lccc@{}}
\toprule
 & $\Theta$ & $\alpha$ ($A$, degree $12$) & $\beta$ ($B$, degree $105$)\\
\midrule
length of the fat point scheme & $7030$ & $93$ & $5679$\\
$h^0$, $h^1$ & $1$, $10$ & $1$, $3$ & $1$, $9$\\
rank of the obstruction map & $10$ & $3$ & $9$\\
combined rank of $(\alpha,\beta)$; together with $\Theta$ & \multicolumn{3}{c}{$12$;\quad$12$}\\
Tjurina length: global $=$ sum over $Z_0$ & -- & $81=81$ & $8092=8092$\\
$i(A,B)$: at $P$; in total & \multicolumn{3}{c}{$162=5\cdot32+2$;\quad $1260=12\cdot105$}\\
\bottomrule
\end{tabular}
\caption{The certificate for $Z_0$ over $\FF_{127}$. Both methods give the same values.}\label{tab:cert}
\end{table}

\section{Proof of Theorem A}\label{sec:proofA}

By Theorem~\ref{thm:cert}, the configuration $Z_0$ satisfies all the assumptions of Theorem~\ref{thm:seed}. Hence there exists a complex ten-point blow-up carrying a smooth irreducible curve $\Gamma\in\ls\Theta$ with $\ob\ne0$. By Corollary~\ref{cor:red}, the class $27379H-8658E$ is nef on the blow-up of $\PP^2$ at ten very general points.

Intersecting with a curve $C$ of degree $d$ and multiplicities $m_i$ gives $27379\,d\ge8658\sum_im_i$. This inequality is equivalent to the bound on the Seshadri constant, which completes the proof. \qed

\begin{corollary}\label{cor:counterexample}
A plane curve violating Nagata's inequality at ten very general points has degree at least $8658$ and total multiplicity at least $27379$.
\end{corollary}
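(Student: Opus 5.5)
The corollary should follow from Theorem~A together with the identity $27379^2-10\cdot8658^2=1$. Let $C$ be a plane curve of degree $d$ with multiplicities $m_i$ at ten very general points that violates Nagata's inequality, so $\sqrt{10}\,d\le m$, where $m=\sum_i m_i$. Theorem~A gives $27379\,d\ge8658\,m$. Combining the two inequalities confines the point $(d,m)$ to a thin sector of the lattice:
$$
\sqrt{10}\le\frac md\le\frac{27379}{8658}.
$$
Since $27379/8658$ is a convergent of $\sqrt{10}$ from above, it is very close to $\sqrt{10}$, and the idea is that no lattice point with small coordinates fits in such a narrow sector.

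To make this precise, I consider the integer $N=27379\,d-8658\,m\ge0$. If $N=0$, then $(d,m)$ is a positive multiple of $(8658,27379)$, because $\gcd(8658,27379)=1$ (this follows from the Pell identity), and the bounds hold trivially. If $N\ge1$, I use the unimodular change of coordinates supplied by the Pell relation. Set $a=27379\,d-8658\,m=N$ and $b=27379\,m-86580\,d$. Then $a^2-10b^2=(d^2-\tfrac{m^2}{10}\cdot 10)\cdot(\ldots)$; more cleanly, multiplication by $27379-8658\sqrt{10}$ in $\mathbb Z[\sqrt{10}]$ sends $m+d\sqrt{10}$ to $b'+a'\sqrt{10}$ and preserves the norm $m^2-10d^2\ge0$. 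The condition $m/d\le27379/8658$ becomes $a'\ge0$, and $m\ge\sqrt{10}\,d$ is preserved, since the norm is unchanged and the sign is fixed by positivity. Reversing the map, $(m,d)=(27379b'+86580a',\,8658b'+27379a')$ with $b'\ge\sqrt{10}\,a'\ge0$. So $(m,d)$ is a nonnegative combination of $(27379,8658)$ and $(86580,27379)$ that is not $0$. Each coordinate is therefore at least the minimum of the corresponding coordinates of the nonzero generators, with the exception of $b'=0$; but $b'=0$ forces $a'=0$, because $b'\ge\sqrt{10}\,a'$.

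This gives $d\ge8658$ and $m\ge27379$ whenever $b'\ge1$. The one point to check is integrality: the coefficients $a',b'$ are integers because the transformation has determinant $1$, and $b'\ge1$ then gives the bounds directly. The main obstacle is purely bookkeeping, namely getting the signs in the Pell automorphism right so that the sector $\sqrt{10}\le m/d\le 27379/8658$ is mapped onto the region $b'\ge\sqrt{10}\,a'\ge0$. Once that is done, the bounds are immediate.
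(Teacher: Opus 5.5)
Your proof is correct. It is the same lattice argument as the paper's, but with a different complementary basis vector.

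Both proofs write $(d,m)$ in a unimodular basis of $\ZZ^2$ that contains $(8658,27379)$, and both show that the coefficient of this vector is a positive integer.

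The paper pairs $(8658,27379)$ with the preceding convergent $(1405,4443)$. The identity $1405\cdot27379-4443\cdot8658=1$ and the single check $4443/1405<\sqrt{10}$ give $1\le1405m-4443d\le d/8658$, hence $d\ge8658$. Then $m\ge27379$ follows from $m>\sqrt{10}\,d\ge8658\sqrt{10}>27378$ and integrality.

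You pair it instead with $(27379,86580)=(1405,4443)+3\,(8658,27379)$, which is the image of $\sqrt{10}$ under multiplication by $27379+8658\sqrt{10}$. The two coordinate systems therefore differ by a shear: your $a'=27379d-8658m$ is exactly the quantity whose nonnegativity the paper uses implicitly, and $b'=(1405m-4443d)-3a'$.

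What your version buys is structural. Norm invariance under the unit $27379-8658\sqrt{10}$ explains why the sector $\sqrt{10}\le m/d\le27379/8658$ is carried onto $b'\ge\sqrt{10}\,a'\ge0$. You also get both bounds at once from $d=8658b'+27379a'$ and $m=27379b'+86580a'$. The paper's choice is more economical and needs no norm argument.

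When writing it up, tidy the following:
\begin{itemize}
\item Delete the garbled formula for $a^2-10b^2$.
\item State explicitly that $b'\ge0$ follows from $b'+a'\sqrt{10}=(27379-8658\sqrt{10})(m+d\sqrt{10})>0$, together with $b'^2\ge10a'^2$ and $a'\ge0$. Alternatively, get it directly from $b'\ge(27379\sqrt{10}-86580)\,d\ge0$.
\item Note that $a'$ and $b'$ are integers simply by their defining formulas; the determinant is not needed for that.
\item Drop the case split $N=0$ versus $N\ge1$, since the general argument already covers $a'=0$.
\end{itemize}
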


\begin{proof}
Put $S=\sum_i m_i$.
Theorem~A and the assumed violation give
$$
\sqrt{10}<\frac{S}{d}\le\frac{27379}{8658}.
$$
Since $4443/1405<\sqrt{10}$ and $1405\cdot27379-4443\cdot8658=1$, we obtain
$$
1\le1405S-4443d\le\frac{d}{8658}.
$$
Thus $d\ge8658$, and $S>\sqrt{10}\,d$ implies $S\ge27379$.
\end{proof}

\section{Scope of the construction}\label{sec:conclusion}

The proof exhibits a way to obtain information about a very general complex surface from a single calculation over a finite field.
The useful output of that calculation is a divisor together with its deformation data.
In our example the divisor is reducible, but its two components impose two more independent conditions than their sum.
This difference makes it possible to leave the reducible locus and smooth both nodes while preserving the total divisor class.
The resulting smooth negative curve then supplies the semistable normal bundle required by the reflection method.

This also explains the role of symmetry.
Cyclic symmetry reduces the search to a tractable finite computation, but the smoothing takes place in the full space of configurations.
At the certified point the locus preserving the two component classes has dimension $8$, accounted for by projective motions, while the total effectivity locus has dimension $10$.
The extra directions are essential to the construction.
The dimension $10$ here is a dimension of ordered configurations; the corresponding local family modulo projective transformations has dimension $2$.

There are two distinct requirements in any further application.
One must choose a negative class whose reflection gives useful numerical information, and one must realize that class by a divisor with sufficiently controlled deformations.
The Pell equation addresses only the first requirement.
The next negative Pell class $2280H-721E$ has expected effectivity codimension $190$, so the present transverse construction in a $20$-dimensional configuration space cannot simply be iterated to that class.
Other classes or different geometric constructions would be needed.
The lifting and smoothing criterion identifies concrete rank and singularity conditions to test in such a search.

\appendix
\section{Symmetry constraints and additional constructions}\label{sec:constraints}

We collect here the constraints and searches which guided the choice of the finite-field example.
They provide context for the construction but are not needed for the implication from the certificate to Theorem~A.

\begin{remark}\label{rem:search}
Let us briefly explain how the configuration $Z_0$ was found. A $\ZZ/3$-invariant curve of class $\Theta$ lies in one of the three eigenspaces of forms of degree $117$ with multiplicity $37$ at the fixed point $P$; for the trivial character this space has dimension $2106$. The three free orbits impose $2109$ conditions, so the $\Theta$-locus is expected to have codimension $4$ in the $4$-parameter family of pairs $(q_2,q_3)$.

We tested all $2\,301\,061$ configurations of this type over $\FF_{127}$, up to a symmetry group of order $108$, which took about $14$ core-hours. We found $3735$ configurations with a unique invariant member, and none of these members is irreducible. In fact, $3713$ of them have a multiple component, $21$ are reduced with three to five components, and only $Z_0$ gives a reduced member with exactly two components.

It is worth noticing that $Z_0$ is a transverse point of the symmetric family, since the corresponding $4\times4$ Jacobian determinant equals $59\ne0$. Moreover, the pair $(A,B)$ is rigid, because the tangent space to $L_\alpha\cap L_\beta$ consists only of the projective motions.
\end{remark}

\subsection{Restrictions on symmetries}

\begin{proposition}\label{prop:aut}
If $\Gamma\in\ls\Theta$ on $X_Z$ is preserved by an automorphism $\sigma$ of $X_Z$, then $\sigma$ is induced by a projective transformation permuting the points of $Z$.
\end{proposition}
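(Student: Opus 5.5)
We have to prove Proposition \ref{prop:aut}: if $\sigma$ is an automorphism of $X_Z$ preserving $\Gamma\in\ls\Theta$, then $\sigma$ comes from a projective transformation permuting $Z$. The plan is to show that $\sigma^*$ fixes $H$ in $N^1(X_Z)$. Then $\sigma$ preserves the pencil-free linear system $\ls H$, which defines the blow-down $X_Z\to\PP^2$. Hence $\sigma$ descends to an automorphism of $\PP^2$, which must permute the blown-up points.

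\textbf{Lattice reduction.} The action $\sigma^*$ on $N^1(X_Z)=\ZZ H\oplus\bigoplus\ZZ E_i$ has the following properties:
\begin{itemize}
\item it is an isometry;
\item it fixes $K=-3H+E$;
\item it fixes $\Theta=117H-37E$, since $\Gamma$ is preserved;
\item it preserves the nef and effective cones.
\end{itemize}
So $\sigma^*$ preserves the rank-two sublattice spanned by $K$ and $\Theta$, and also its orthogonal complement. On that complement it lies in the Weyl group of $E_{10}$ intersected with the automorphism group of the cone.

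A cleaner route: $K$ and $\Theta$ span the same rational plane as $H$ and $E$, because $H=(37K+3\Theta)/(\ldots)$ up to solving a $2\times2$ system. Indeed $\det\begin{pmatrix}-3&1\\117&-37\end{pmatrix}=111-117=-6\neq0$. So $H$ and $E$ lie in $\QQ K+\QQ\Theta$, and $\sigma^*$ fixes them rationally, hence integrally since $N^1$ is torsion free.

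\textbf{Descending $\sigma$.} With $\sigma^*H=H$, the automorphism $\sigma$ preserves $\ls H$ and commutes with the morphism $\phi_{\ls H}\colon X_Z\to\PP^2$. It therefore induces a projective linear map $\bar\sigma$ with $\phi\circ\sigma=\bar\sigma\circ\phi$. The exceptional curves are exactly the curves contracted by $\phi$, so $\sigma$ permutes the $E_i$. Consequently $\bar\sigma$ permutes the points of $Z$, and $\sigma$ is the lift of $\bar\sigma$.

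\textbf{Main obstacle.} The point needing care is checking that fixing $K$ and $\Theta$ really forces $\sigma^*H=H$. This is just the nondegenerate $2\times2$ computation above, using that $\sigma^*$ is linear over $\QQ$. Note that it does not even use that $\sigma^*$ is an isometry, only that it fixes $K$, which holds for any automorphism, and fixes $\Theta$. If one worries that $\sigma^*\Gamma=\Gamma$ only up to linear equivalence, this is harmless, since classes are all we use.
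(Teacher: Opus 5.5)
Your proof is correct and is essentially the paper's argument: the paper notes that $\Theta+37K=6H$, so $\sigma^*$ fixes $H$ and therefore commutes with the morphism $X_Z\to\PP^2$ given by $\ls{H}$. Your nonvanishing determinant gives the same conclusion, but the explicit combination you wrote is wrong: $37K+3\Theta=240H-74E$ is not a multiple of $H$, and the correct relation is $6H=\Theta+37K$.
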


\begin{proof}
The pull-back $\sigma^*$ fixes $K$ and $\Theta$, hence it fixes $\Theta+37K=6H$, and so it fixes $H$. Therefore $\sigma$ commutes with the morphism $X_Z\to\PP^2$ given by $\ls H$.
\end{proof}

\begin{proposition}\label{prop:inv}
If $Z$ is invariant under an involution of $\PP^2$, then every member of $\ls\Theta$ is reducible.
\end{proposition}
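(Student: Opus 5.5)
The plan is to argue by contradiction: assume some member $\Gamma\in\ls\Theta$ is irreducible. First I will show that $\Gamma$ is then $\sigma$-invariant. Then I will use either the explicit form of an invariant equation or a parity obstruction coming from the quotient by $\sigma$. I work over a field of characteristic $\ne2$, in particular over $\CC$.

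\emph{Step 1: $\Gamma$ is $\sigma$-invariant.} Let $\sigma$ be the involution. It permutes $Z$, so it lifts to an automorphism of $X_Z$ that permutes the $E_i$. Since $\Theta=117H-37E$ is symmetric in the $E_i$, we get $\sigma^*\Theta=\Theta$. An irreducible curve with $\Gamma^2=-1<0$ is the unique member of its linear system, so $h^0(\Theta)=1$. Hence $\sigma(\Gamma)=\Gamma$, and the equation $f$ of degree $117$ satisfies $f\circ\sigma=\pm f$. In suitable coordinates $\sigma=\operatorname{diag}(1,1,-1)$, with fixed line $L=\{z=0\}$ and isolated fixed point $O=(0:0:1)$.

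\emph{Step 2: reduce to the even case.} If $f(x,y,-z)=-f$, then every monomial of $f$ has odd degree in $z$, so $z\mid f$. Then $L$ is a component of $A=\{f=0\}$, and since $\deg f=117>1$, $\Gamma$ is reducible, a contradiction. We may therefore assume $f=g(x,y,z^2)$. As $117$ is odd, every monomial has odd degree in $(x,y)$, so $O\in\{f=0\}$ with odd multiplicity.

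\emph{Step 3: parity contradiction via the quotient.} Let $\Gamma$ be $\sigma$-invariant and irreducible, so not pointwise fixed. Pass to a $\sigma$-equivariant resolution $\hat X\to X_Z$ obtained by blowing up the isolated fixed points of $\sigma$ on $X_Z$. These are $O$ if $O\notin Z$, or points on exceptional curves $E_z$ with $z\in L\cup\{O\}$, where the action on $E_z\cong\PP^1$ has two fixed points. On $\hat X$ the fixed locus is a smooth divisor, the quotient $Y=\hat X/\sigma$ is smooth, and $\pi\colon\hat X\to Y$ is a double cover.

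Let $\hat\Gamma$ be the strict transform of $\Gamma$ and $\Gamma'=\pi(\hat\Gamma)$. Since $\hat\Gamma$ is not contained in the branch locus and is invariant, $\pi^*\Gamma'=\hat\Gamma$, because the map $\hat\Gamma\to\Gamma'$ has degree $2$. This gives $\hat\Gamma^2=2\Gamma'^2$, which is even. On the other hand, $\hat\Gamma^2=\Gamma^2-\sum m_q^2=-1-\sum m_q^2$, where $m_q$ is the multiplicity of $\Gamma$ at the isolated fixed points $q$ blown up. So the contradiction reduces to showing that $\sum m_q^2$ is even, that is, that an even number of these $m_q$ are odd.

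For this I would compute $m_q$ from the local form of $f$. The case $O\notin Z$ is covered by Step 2, which shows $m_O$ is odd. So I need a compensating odd multiplicity, or a proof that the multiplicities at the fixed points on $E_z$ for $z\in L$ pair up. Here the invariance $f=g(x,y,z^2)$ forces the tangent cone at $z\in L$ to be even in the normal direction; this constrains the local intersection numbers $\Gamma\cdot E_z$ at the two fixed points of $E_z$. These two numbers must have sum $37$.

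\emph{Main obstacle.} The hard part is the bookkeeping in Step 3. I must determine the isolated fixed points on $X_Z$ in every configuration type, namely how many points of $Z$ lie on $L$, whether $O\in Z$, and how the $2$-orbits are placed. For each type I must verify the parity of $\sum m_q^2$, using the intersection numbers $\Gamma\cdot E_z=37$, $\Gamma\cdot\tilde L$, and the odd multiplicity at $O$. If this case analysis becomes unwieldy, I would fall back on a Lefschetz-type count. The holomorphic Lefschetz formula for $\sigma$ acting on $\OO_{X_Z}(\Gamma)$, which has $h^0=1$ and $h^1=10$ with $\sigma$ acting by characters, gives a parity constraint on the trace. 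I would compare this with the contributions from the fixed curve $\tilde L$ and the isolated fixed points, where $\Gamma$ meets them with the multiplicities computed above.
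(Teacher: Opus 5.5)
\textbf{There is a genuine gap: Step 3 cannot produce a contradiction.} The parity of $\sum m_q^2$ comes out the wrong way in every configuration, so the congruence you propose to test holds automatically.

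First, the isolated fixed points of $\sigma$ on $X_Z$ are:
\begin{enumerate}[label=(\roman*)]
\item $O$, but only when $O\notin Z$. If $O\in Z$, then $\sigma$ acts by $-1$ on $T_O\PP^2$, so $E_O$ is pointwise fixed; it does not carry two fixed points as you wrote.
\item For each $z\in Z\cap L$, the point $n_z\in E_z$ corresponding to the normal direction. The other fixed point of $E_z$ lies on the fixed curve $\tilde L$, so it is not isolated.
\end{enumerate}
You showed that $m_O$ is odd. The same argument gives that $m_{n_z}$ is odd. Take local coordinates $(s,t)$ at $z$ with $f$ even in $t$. In the blow-up chart $s=ut$ we have $\sigma(u,t)=(-u,-t)$ and $f(ut,t)=t^{37}\tilde f(u,t)$. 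Invariance of $f$ forces $\tilde f(-u,-t)=-\tilde f(u,t)$, so $\tilde f$ has odd order at the origin.

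Now put $a=\#(Z\cap L)$, and let $c=1$ if $O\in Z$ and $c=0$ otherwise. The number of odd $m_q$ is $a+1-c$. This is odd, because $a+c\equiv 0\pmod 2$. Hence $\sum m_q^2$ is odd, and $\hat\Gamma^2=-1-\sum m_q^2$ is even, exactly as $\hat\Gamma^2=2\Gamma'^2$ requires. In the five-pairs case, for instance, $O$ is the only isolated fixed point, so there is no compensating odd multiplicity. The parity condition is therefore an identity satisfied by any invariant curve in $\ls\Theta$, and it cannot rule out irreducible members.

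A smaller error: the local intersection numbers of $\Gamma$ with $E_z$ at its two fixed points need not sum to $37$. The remaining intersection points come in $\sigma$-pairs, so only the parity of that sum is determined.

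\textbf{The missing ingredient is a dimension count, not a parity.} The paper takes $\sigma=\operatorname{diag}(-1,1,1)$ and counts invariant forms directly.
\begin{enumerate}[label=(\roman*)]
\item Forms of degree $117$ that are even in $x$ span a space of dimension $3540$.
\item On these forms, a pair $\{q,\sigma q\}$ imposes at most $703$ conditions.
\item A point on the axis imposes $361$ conditions, since only even jets matter.
\item The centre imposes $342$ conditions, coming from odd jets.
\end{enumerate}
Over all orbit types with at most three points on the axis, this gives at most $3534$ conditions. Hence $h^0(\Theta)\ge 6$ unconditionally. A class with $\Theta^2<0$ and $h^0\ge2$ has a fixed component, so every member is reducible. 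If four or more points lie on the axis, then $\Theta\cdot(H-E_1-\dots-E_4)=-31<0$ forces the axis to be a component.

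Your holomorphic Lefschetz fallback could in principle recover this, since equivariant Riemann--Roch computes the Euler characteristic of the $\sigma$-invariant part of $H^*(\Theta)$. But what you would need from it is the inequality that this invariant Euler characteristic is at least $2$, not a parity of a trace. The case of four or more points on the axis, where the count fails, still needs the separate line-component argument. Note also that this route makes your Steps 1--2 unnecessary: it yields $h^0(\Theta)\ge 6$ without first assuming an irreducible member.
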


\begin{proof}
Every involution is conjugate to $\sigma=\operatorname{diag}(-1,1,1)$, with axis $x=0$ and centre $(1:0:0)$. The forms of degree $117$ which are even in $x$ span a space of dimension $3540$. A pair $\{q,\sigma q\}$ imposes at most $703$ conditions on these forms. A point on the axis imposes $361$ conditions, since only even jets matter. The centre imposes $342$ conditions, coming from the jets of odd degree.

If four of the points lie on the axis, then the axis is a component of every member, since $\Theta\cdot(H-E_1-\dots-E_4)<0$. Otherwise, a direct check of the possible orbit types shows that the points impose at most $3534$ conditions. Hence $h^0(\Theta)\ge6$. But a class of negative self-intersection with $h^0\ge2$ has a fixed component, so every member is reducible.
\end{proof}

By Propositions~\ref{prop:aut} and~\ref{prop:inv}, the symmetry group of a pair $(X_Z,\Gamma)$ with $\Gamma$ irreducible is a group of projectivities of odd order.

\begin{remark}[Where to look]
We ran a census of the cyclic types with distinct weights, excluding homologies and types with four collinear points. It leaves two types for which the $\Theta$-locus has non-negative expected dimension and a count of semi-invariants does not already force $h^0(\Theta)\ge2$. These are $\ZZ/5$ with orbits $5+5$, where the expected codimension is $2$ in a $2$-parameter family, and $\ZZ/3$ with orbits $3+3+3+1$ from Section~\ref{sec:cert}.

In the $\ZZ/5$ family we scanned the sixteen primes $p\equiv1\pmod5$ with $131\le p\le491$, and every member of $\ls\Theta$ we found is reducible. The most common splittings have components of degrees $95+11+11$ and $87+30$. The splitting $\theta_2+(57H-18E)$ also occurs, where $\theta_2=60H-19E$ is the class from Remark~\ref{rem:rung4}.
\end{remark}

\begin{remark}[Numerical selection of the reflecting class]
One can ask whether there are other classes which could play the role of $\Theta$. We searched the classes $C=dH-\sum_im_iE_i$ with at most two distinct multiplicities, $d\le200$, $C^2<0$ and expected codimension at most $20$, and we compared the bound given by $s_C(H)$ with Petrakiev's bound $2280/721=3.16227462\ldots$ Only three classes beat it:
\begin{center}
\begin{tabular}{lccc}
\toprule
class & $117H-37E$ & $98H-31E$ & $79H-25E$\\
expected codimension & $10$ & $11$ & $11$\\
bound & $3.16227766$ & $3.16227751$ & $3.16227684$\\
\bottomrule
\end{tabular}
\end{center}
The next Pell class $2280H-721E$ has expected codimension $190$, far above the $12$ moduli of ten points. So going beyond Theorem~A with a single reflection would require a curve much more special than any configuration is expected to carry. For twenty points the analogous search finds no class of expected codimension at most $32$ which beats Petrakiev's bound $720/161$.
\end{remark}

\begin{remark}[Petrakiev's bound from a single elliptic curve]\label{rem:rung4}
Our approach also gives a short proof of Petrakiev's bound. Let $p=131$, let $\zeta$ be a primitive fifth root of unity, and consider
$$
Z=\langle\operatorname{diag}(\zeta,\zeta^{-1},1)\rangle\cdot\{(1:1:1),(49:14:1)\}.
$$
The blow-up $X_Z$ carries a smooth elliptic curve $T\in\ls{60H-19E}$ with $T^2=-10$ and $h^0=1$. This point is Hensel-liftable in the symmetric family, since the Jacobian determinant is $63$, so $T$ lifts to a smooth curve over $\ZZ_{131}$, and hence over $\CC$. Moreover, $h^1(N_T)=10$ and the obstruction map has rank $10$, which persists on the lift.

The destabilizing extensions form a cone of dimension at most $9<10$ in $H^1(N_T)$. So for a general motion the bundle $N_{T/\cX}$ is semistable, and Proposition~\ref{prop:refl} gives the nefness of $s_T(H)=H+12T=721H-228E$. In other words, the bound $d/m\ge2280/721$ follows from the existence of a single curve.
\end{remark}

\section*{AI Declaration}
Symbolic computations revolving around configuration $Z_0$ were designed and optimized with
the use of \texttt{Claude} (Anthropic). All the mathematical results were obtained using human intelligence by both authors.

\section*{Funding}
Piotr Pokora is supported by the National Science Centre (Poland) Sonata Bis Grant  \textbf{2023/\allowbreak 50/\allowbreak E/\allowbreak ST1/\allowbreak 00025}. For the purpose of Open Access, the author has applied a CC-BY public copyright license to any Author Accepted Manuscript (AAM) version arising from this submission.

\section*{Data availability statement}
The scripts and logs used in Section~\ref{sec:cert} are available under \cite{repo}.

\section*{Conflict of interests}
The authors have no conflicts of interest to disclose.


\begin{thebibliography}{99}
\bibitem{Primer} Th.~Bauer, S.~Di Rocco, B.~Harbourne, M.~Kapustka, A.~L.~Knutsen, W.~Syzdek, and T.~Szemberg, A primer on Seshadri constants. In: \emph{Interactions of classical and numerical algebraic geometry}, Contemp. Math. \textbf{496}: 33--70 (2009).
\bibitem{Bir} P.~Biran, Constructing new ample divisors out of old ones. \emph{Duke Math. J.} \textbf{98}: 113--135 (1999).
\bibitem{CDMR} C.~Ciliberto, O.~Dumitrescu, R.~Miranda, and J.~Ro\'e, Emptiness of homogeneous linear systems with ten general base points. In: \emph{Classification of algebraic varieties}, EMS Ser. Congr. Rep., European Mathematical Society, Z\"urich, 189--195 (2011).
\bibitem{CM} C.~Ciliberto and R.~Miranda, Homogeneous interpolation on ten points. \emph{J. Algebraic Geom.} \textbf{20}: 685--726 (2011).
\bibitem{Sing} W.~Decker, G.-M.~Greuel, G.~Pfister, and H.~Sch\"onemann, \textsc{Singular} 4-3-2 --- A computer algebra system for polynomial computations. \url{https://www.singular.uni-kl.de} (2023).
\bibitem{dFneg} T.~de Fernex, Negative curves on very general blow-ups of $\PP^2$. In: \emph{Projective varieties with unexpected properties}, de Gruyter, Berlin, 199--207 (2005).
\bibitem{dFcone} T.~de Fernex, On the Mori cone of blow-ups of the plane. Preprint, \href{https://arxiv.org/abs/1001.5243}{arXiv:1001.5243} (2010).
\bibitem{DR} C.~Dionne and M.~Roth, Seshadri constants on $\PP^1\times\PP^1$ and applications to the symplectic packing problem. \emph{Forum Math. Sigma} \textbf{13}: e191, 42 pp. (2025).
\bibitem{Dum} M.~Dumnicki, Regularity and non-emptyness of linear systems in $\PP^n$. Preprint, \href{https://arxiv.org/abs/0802.0925}{arXiv:0802.0925} (2008).
\bibitem{Eck} T.~Eckl, Ciliberto--Miranda degenerations of $\CC\PP^2$ blown up in 10 points. \emph{J. Pure Appl. Algebra} \textbf{215}: 672--696 (2011).
\bibitem{FLINT} The FLINT team, \textsc{FLINT}: Fast Library for Number Theory, version 3. \url{https://flintlib.org}.
\bibitem{EGA} A.~Grothendieck, \'El\'ements de g\'eom\'etrie alg\'ebrique IV. \'Etude locale des sch\'emas et des morphismes de sch\'emas III. \emph{Publ. Math. IH\'ES} \textbf{28}: 5--255 (1966).
\bibitem{HS} T.~H.~H\`a and A.~Sivakumar, Polynomial interpolation and the Waldschmidt constant of points in projective space. Preprint, \href{https://arxiv.org/abs/2608.16040}{arXiv:2608.16040} (2026).
\bibitem{HR} B.~Harbourne and J.~Ro\'e, Computing multi-point Seshadri constants on $\PP^2$. \emph{Bull. Belg. Math. Soc. Simon Stevin} \textbf{16}: 887--906 (2009).
\bibitem{LU} A.~Laface and L.~Ugaglia, Irrational Seshadri constants from dihedral orbits. Preprint, \href{https://arxiv.org/abs/2609.26521v2}{arXiv:2609.26521v2} (2026).
\bibitem{Nag} M.~Nagata, On the 14-th problem of Hilbert. \emph{Amer. J. Math.} \textbf{81}: 766--772 (1959).
\bibitem{Pet} I.~Petrakiev, Homogeneous interpolation and some continued fractions. \emph{Trans. Amer. Math. Soc. Ser. B} \textbf{1}: 23--44 (2014).
\bibitem{repo}
P.~Pokora and T.~Szemberg, Lifting negative curves from finite fields and Seshadri constants at ten points: scripts and logs. GitHub repository, \url{https://github.com/piotrpokora/lifting-negative-curves-seshadri10} (2026).
\bibitem{Ser} E.~Sernesi, \emph{Deformations of algebraic schemes}. Grundlehren der Mathematischen Wissenschaften \textbf{334}, Springer-Verlag, Berlin, 2006.
\end{thebibliography}
\end{document}